\documentclass[11pt]{article}
\usepackage[utf8]{inputenc}
\usepackage{amsmath, amssymb, amsthm}
\usepackage{lmodern}
\usepackage{geometry}
\geometry{margin=1in}
\usepackage[most]{tcolorbox}
\usepackage{titlesec}
\usepackage[colorlinks=true, linkcolor=red, citecolor=red, urlcolor=cyan]{hyperref}

\definecolor{red1}{RGB}{255,230,230}
\definecolor{red2}{RGB}{255,200,200}
\definecolor{red3}{RGB}{255,170,170}
\definecolor{red4}{RGB}{255,140,140}
\definecolor{red5}{RGB}{255,110,110}

\definecolor{darkred}{RGB}{120, 20, 20}
\titleformat{\section}{\color{darkred}\normalfont\Large\bfseries}{\thesection}{1em}{}

\newtcbtheorem[auto counter, number within=section]{theorem}{Theorem}{
  colback=red5!10!white, colframe=red5!80!black,
  coltitle=white, fonttitle=\bfseries,
  title=Theorem~\thetcbcounter,
  boxed title style={toprule=2pt, colback=red5!70!black},
  breakable, enhanced
}{th}

\newtcbtheorem[auto counter, number within=section]{definition}{Definition}{
  colframe=white, colback=white,
  coltitle=darkred, fonttitle=\bfseries,
  boxed title style={empty},
  breakable, enhanced, sharp corners,
  before skip=10pt, after skip=10pt,
  title={\textcolor{darkred}{Definition~\thetcbcounter}}
}{def}

\newtcbtheorem[auto counter, number within=section]{remark}{Remark}{
  colframe=white, colback=white,
  coltitle=darkred, fonttitle=\bfseries,
  boxed title style={empty},
  breakable, enhanced, sharp corners,
  before skip=10pt, after skip=10pt
}{rmk}

\newtcbtheorem[auto counter, number within=section]{corollary}{Corollary}{
  colframe=white, colback=white,
  coltitle=darkred, fonttitle=\bfseries,
  boxed title style={empty},
  breakable, enhanced, sharp corners,
  before skip=10pt, after skip=10pt
}{cor}

\newtcbtheorem[auto counter, number within=section]{proposition}{Proposition}{
  colframe=white, colback=white,
  coltitle=darkred, fonttitle=\bfseries,
  boxed title style={empty},
  breakable, enhanced, sharp corners,
  before skip=10pt, after skip=10pt
}{prop}

\usepackage{fancyhdr}
\setlength{\headheight}{13.6pt}
\addtolength{\topmargin}{-1.6pt} 
\pagestyle{fancy}
\fancyhf{}
\fancyhead[C]{\textcolor{gray}{\footnotesize Lecture Notes $\cdot$ Prof. Helder Rojas $\cdot$ Stochastic Processes and Diffusion Equations}}
\fancyfoot[C]{\thepage}

\title{\textbf{\textcolor{darkred}{Stochastic Processes and Diffusion Equations}}}

\date{}

\begin{document}
\author{Helder Rojas \\
\small Departamento de Matemáticas Fundamentales\\
\small Universidad Nacional de Educación a Distancia, España\\
}
\maketitle
\thispagestyle{fancy}

In these lecture notes, we explore the mathematical preliminaries and foundational concepts that connect stochastic processes with partial differential equations. We begin by investigating Brownian motion, which serves as a model for random fluctuations and is deeply connected to the heat equation. This connection forms the basis for understanding diffusion phenomena, where the probability distribution of Brownian motion evolves according to the heat equation over time. To extend this classical result to more general stochastic systems, we introduce the Itô calculus, a powerful framework that allows us to analyze processes driven by both deterministic drift and stochastic fluctuations. This mathematical tool is essential for understanding the dynamics of more complex diffusion processes, where randomness is no longer purely Brownian, but also depends on the underlying system’s state. Building on these concepts, we turn to the study of diffusion processes, which generalize Brownian motion by incorporating the Fokker-Planck equation. This equation describes how the probability density of a diffusion process evolves over time and serves as an extension of the heat equation to more complex stochastic systems. By using Itô calculus, we can rigorously study how these processes behave and connect the microscopic randomness of individual particles to their macroscopic description via partial differential equations. The results presented in this section include theorems, lemmas, and corollaries, whose proofs are omitted for brevity. These can be found in classical references on stochastic analysis, such as \cite{Protter2005, oksendal, kuo, Kallenberg2021, Rogers_Williams_2000}.

\section{Brownian motion and the Heat equation}
Brownian motion ($W_t$) is a fundamental stochastic process that serves as the cornerstone of modern probability theory and stochastic analysis. Originally introduced as a mathematical model for random particle displacement in a fluid, it has since become a critical tool in diverse fields, including financial mathematics, statistical physics, and partial differential equations (PDEs). From a probabilistic perspective, Brownian motion is the canonical continuous-time Markov process, characterized by independent and normally distributed increments. These properties, along with its role as a martingale, make it a natural model for stochastic systems exhibiting purely random fluctuations. Furthermore, its transition densities satisfy the heat equation, establishing a deep connection between probability theory and classical PDEs. This relationship is crucial in the study of diffusion processes and potential theory.

In this section, we present the formal definition of Brownian motion, its transition density function, and its existence theorem. Finally, we state a fundamental result linking Brownian motion to the heat equation, illustrating how the evolution of its probability distribution is governed by a deterministic PDE.

\newpage

\begin{definition}{}{}
 A stochastic process \(\{W_t : t \geq 0\}\) defined on a probability space \((\Omega, \mathcal{F}, \mathbb{P})\) is called a \textit{standard Brownian motion} (or standard Wiener process) if it satisfies the following properties:   
\begin{enumerate}
    \item \(W_0 = 0\) almost surely.
    \item For \(0 \leq s < t\), the increment \(W_t - W_s \sim \mathcal{N}(0, t-s)\), where \(\mathcal{N}(0, t-s)\) denotes a normal distribution with mean \(0\) and variance \(t-s\).
    \item The increments \(W_{t_i} - W_{t_{i-1}}\) are independent for \(t_0 < t_1 < \cdots < t_n\).
    \item The sample paths \(t \mapsto W_t\) are continuous almost surely.
\end{enumerate}
\end{definition}

\begin{remark}{}{}
The Brownian motion \( W_t \) described above starts at \( 0 \). In certain situations, we may require a Brownian motion that begins at a different starting point \( x \). Such a process is represented as \( x + W_t \). When the starting point is not \( 0 \), it will be explicitly specified as \( x \).
\end{remark}

\begin{proposition}{}{}
 Let \(\{W_t\}_{t \geq 0}\) be a Brownian motion defined starting at $x$ on a probability space \((\Omega, \mathcal{F}, \mathbb{P})\), taking values in \(\mathbb{R}\). Then, for any \(t > 0\), the transition density \(p(t, x, y)\) of the process is given by
 \begin{equation}\label{td}
    p(t, x, y) = \frac{1}{\sqrt{2 \pi t}} \exp\left(-\frac{(y - x)^2}{2t}\right), \quad x, y \in \mathbb{R}. 
 \end{equation}  
\end{proposition}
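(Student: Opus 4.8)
The plan is to identify the transition density $p(t,x,y)$ as the conditional density of the process at time $t$ given that it starts at $x$, and then to reduce the entire computation to the Gaussian increment property recorded in item~2 of the definition. First I would recall what the transition density encodes: for the Markov process at hand, $p(t,x,y)$ is the density in the variable $y$ of the law of the process at time $t$, conditioned on the starting position $x$. By the remark preceding the proposition, a Brownian motion started at $x$ is precisely the process $x + W_t$, where $\{W_t\}_{t\ge 0}$ is a \emph{standard} Brownian motion with $W_0 = 0$.

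Next I would invoke property~2 of the definition with $s = 0$. Since $W_0 = 0$ almost surely, the increment $W_t - W_0 = W_t$ is distributed as $\mathcal{N}(0,t)$. Translating by the deterministic constant $x$ preserves Gaussianity and merely shifts the mean, so the process value $x + W_t$ has law $\mathcal{N}(x,t)$. The only point requiring a word of care is this translation step: I would note that if $Z \sim \mathcal{N}(0,t)$ then $x + Z \sim \mathcal{N}(x,t)$, which follows from a one-line change of variables in the density (or equivalently from multiplying the characteristic function by $e^{i x \xi}$).

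Finally, I would write down the density of an $\mathcal{N}(x,t)$ random variable evaluated at the point $y$, namely $\frac{1}{\sqrt{2\pi t}}\exp\!\big(-\frac{(y-x)^2}{2t}\big)$, and observe that this coincides exactly with \eqref{td}. Since every step is a direct consequence of the definition, there is no genuine obstacle here; the hardest part is really just being precise about the meaning of ``transition density,'' after which the statement becomes a restatement of the Gaussian increment property together with the translation invariance built into the representation $x + W_t$. I would also remark in passing that the resulting kernel depends on $x$ and $y$ only through $y - x$, reflecting the spatial homogeneity of Brownian motion, a fact that will be convenient when connecting $p$ to the heat equation in the sequel.
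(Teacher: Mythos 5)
Your argument is correct: reducing the claim to the Gaussian increment property of Definition~1.1 (item~2 with $s=0$), together with the translation $x+W_t$ from the preceding remark, is the standard and essentially only proof of this fact. The paper itself states this proposition without proof, so there is nothing to compare against, but your reasoning — including the careful note that translating an $\mathcal{N}(0,t)$ variable by $x$ yields $\mathcal{N}(x,t)$, and the observation that the kernel depends only on $y-x$ — is complete and would serve as the omitted proof.
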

    
\begin{theorem}{Existence of Brownian motion}{exis-BM}
There exists a probability space \((\Omega, \mathcal{F}, \mathbb{P})\) and a stochastic process \(\{W_t\}_{t \geq 0}\) on \(\Omega\) such that the finite-dimensional distributions 
of \(W_t\) are given by
\begin{equation}\label{dfd}
\mathbb{P}(W_{t_1} \in F_1, \cdots, W_{t_k} \in F_k) =
\int_{F_1 \times \cdots \times F_k} p(t_1, x, x_1) \cdots p(t_k - t_{k-1}, x_{k-1}, x_k) \, dx_1 \cdots dx_k,
\end{equation}
where \(F_1, \cdots, F_k\) denote Borel sets in \(\mathbb{R}\) and \(k \in \mathbb{N}\).
\end{theorem}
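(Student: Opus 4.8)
The plan is to construct the process via Kolmogorov's Extension Theorem, whose hypotheses reduce the problem to checking that the prescribed finite-dimensional distributions form a \emph{consistent} family. For each finite set of ordered times $0 < t_1 < \cdots < t_k$ I would define the Borel probability measure $\mu_{t_1,\ldots,t_k}$ on $\mathbb{R}^k$ by the right-hand side of \eqref{dfd}, and extend the definition to arbitrary (unordered) tuples by permuting coordinates. Since each $p(t,x,y)$ is a genuine probability density in $y$ (it is the $\mathcal{N}(x,t)$ density by the Proposition), every $\mu_{t_1,\ldots,t_k}$ is a probability measure on $\mathbb{R}^k$. It then suffices to verify the two Kolmogorov consistency conditions: invariance under permutation of the time indices, and compatibility under marginalization (projecting out a coordinate).

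The permutation condition holds essentially by construction, once we agree to define the measure for an arbitrary tuple by first sorting the times and then reindexing accordingly. The substantive step is the marginalization condition. Dropping the \emph{last} time point is immediate, because $\int_{\mathbb{R}} p(t_k - t_{k-1}, x_{k-1}, x_k)\, dx_k = 1$. The crucial case is dropping an \emph{intermediate} time $t_j$: here integrating out the variable $x_j$ forces the identity
\begin{equation}\label{ck}
\int_{\mathbb{R}} p(t_j - t_{j-1}, x_{j-1}, x_j)\, p(t_{j+1}-t_j, x_j, x_{j+1})\, dx_j = p(t_{j+1}-t_{j-1}, x_{j-1}, x_{j+1}),
\end{equation}
which is precisely the Chapman--Kolmogorov (semigroup) property of the Gaussian kernel. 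I would establish \eqref{ck} by a direct convolution computation: the left-hand side is the density of the sum of two independent increments distributed as $\mathcal{N}(0, t_j - t_{j-1})$ and $\mathcal{N}(0, t_{j+1}-t_j)$, and completing the square in the exponent shows this equals the $\mathcal{N}(0, t_{j+1}-t_{j-1})$ density, i.e. the additivity of variances for independent Gaussians.

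With consistency verified, Kolmogorov's Extension Theorem supplies a probability space $(\Omega, \mathcal{F}, \mathbb{P})$ and a process $\{W_t\}_{t \geq 0}$ whose finite-dimensional distributions are exactly those of \eqref{dfd}, which is the claimed conclusion. I expect the main obstacle to be the bookkeeping around \eqref{ck} together with handling the permutation and ordering conventions cleanly; the convolution itself is routine but must be carried out carefully so that the variances match. Finally I would add a remark: the statement as phrased asks only for the finite-dimensional distributions, so the argument above is complete. However, the canonical space produced by the extension theorem carries only the product $\sigma$-algebra, under which sample-path continuity (property~4 of the Definition) is not even a measurable event. To upgrade $\{W_t\}$ to an honest Brownian motion one must further invoke the Kolmogorov--Chentsov continuity criterion to select a continuous modification, a point I would flag but which lies beyond what \eqref{dfd} demands.
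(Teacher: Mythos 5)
Your proposal follows the same route as the paper: define the finite-dimensional measures by the right-hand side of \eqref{dfd} and invoke Kolmogorov's extension theorem after checking consistency. In fact you are more careful than the paper, which asserts consistency merely from $\int_{\mathbb{R}} p(t,x,y)\,dy = 1$ (this only handles dropping the last time point), whereas you correctly identify the Chapman--Kolmogorov identity as the substantive step for marginalizing an intermediate time, and your closing remark about needing a continuous modification via Kolmogorov--Chentsov to get an actual Brownian motion is a valid point the paper also leaves implicit.
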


\begin{proof}
If $0 \leq t_1 \leq t_2 \leq \cdots \leq t_k$, define a measure $\nu_{t_1, \ldots, t_k}$ on $\mathbb{R}^{k}$ by
\[
\nu_{t_1, \ldots, t_k}(F_1 \times \cdots \times F_k) :=
\int_{F_1 \times \cdots \times F_k} p(t_1, x, x_1) p(t_2 - t_1, x_1, x_2) \cdots p(t_k - t_{k-1}, x_{k-1}, x_k) \, dx_1 \cdots dx_k,
\]
where $p(0, x, y)\, dy_1 \cdots dy_k = \delta_x(y)$, the unit point mass at $x$. Since $\int_{\mathbb{R}} p(t, x, y) \, dy = 1$ for all $t\geq 0$, the probability measure $\nu_{t_1, \ldots, t_k}$ satisfies the consistency conditions of Kolmogorov’s extension theorem, see \cite{oksendal}. Therefore, there is a stochastic process $\{W_t\}_{t \geq 0}$ on $\Omega$ whose transition density is given by \eqref{td}.
    
\end{proof}

\begin{figure}[!h]
    \centering
    \includegraphics[width=0.55\linewidth]{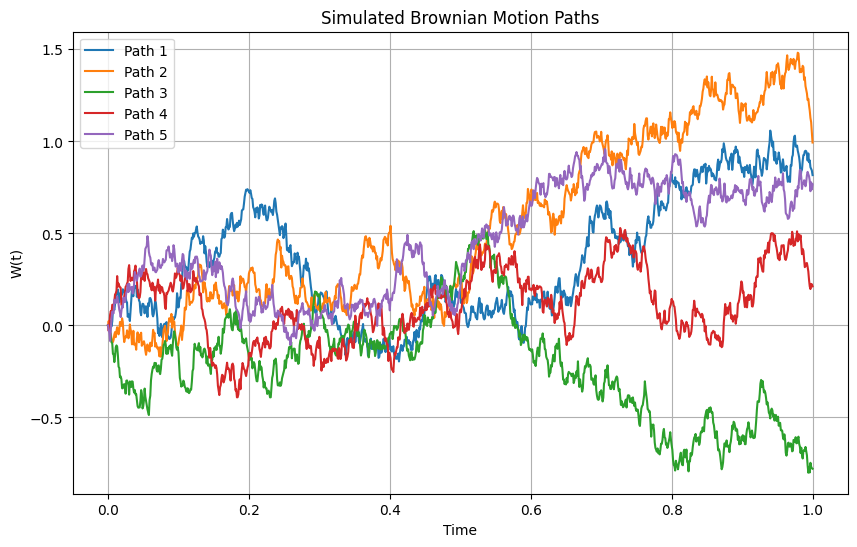}
    \caption[Simulated Brownian motion trajectories.]{This plot shows five simulated trajectories of Brownian motion. Each line represents an independent realization of the stochastic process.}
    \label{fig:BM}
\end{figure}

\begin{theorem}{Connection between Brownian motion and the Heat equation}{Conexion-one-dim}
Let \(\{W_t\}_{t \geq 0}\) be a Brownian motion starting at \(x\) defined on a probability space \((\Omega, \mathcal{F}, \mathbb{P})\), and assume that \(f \in C(\mathbb{R})\cap L^{\infty}( \mathbb{R})\). Furthermore, consider the heat equation
\begin{equation}\label{HeatEquation}
   \frac{\partial u}{\partial t} = \frac{\partial^2 u}{\partial x^2}, \quad x \in \mathbb{R}, \, t > 0,
\end{equation}
with initial condition \(u(x, 0) = f(x)\). Then, it holds that:
\begin{enumerate}
    \item There exists a classical solution \(u \in C^{\infty}(\mathbb{R} \times (0, \infty))\) given by
    \[
    u(x, t) = \frac{1}{\sqrt{4\pi t}} \int_{\mathbb{R}} f(y) \exp\left(-\frac{(y - x)^2}{4t}\right) \, dy, \quad t > 0.
    \]
    \item The solution \(u(x, t)\) admits a stochastic representation in terms of Brownian motion:
    \[
    u(x, t) = \mathbb{E}[f(W_t) \mid W_0 = x],
    \]
    where \(\mathbb{E}\) denotes the expectation with respect to the probability measure induced by the Brownian motion \(\{W_t\}_{t \geq 0}\).
\end{enumerate}
\end{theorem}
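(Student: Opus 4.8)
The plan is to prove the two assertions in turn: first the analytic statement that the displayed convolution integral is a smooth classical solution attaining the initial data, and then the probabilistic statement identifying this solution with the expectation of $f(W_t)$. Throughout write $G(z,t) := \frac{1}{\sqrt{4\pi t}}\exp\left(-\frac{z^2}{4t}\right)$ for the heat kernel, so that $u(x,t) = \int_{\mathbb{R}} f(y)\,G(x-y,t)\,dy$. I would begin by verifying directly, through elementary differentiation, that $G$ satisfies $\partial_t G = \partial_{xx} G$ on $\mathbb{R}\times(0,\infty)$. To transfer smoothness and the equation to $u$, the key step is to justify differentiating under the integral sign: for any $x$ and any $t$ in a compact subinterval $[a,b]\subset(0,\infty)$, every partial derivative $\partial_x^{\,j}\partial_t^{\,k} G(x-y,t)$ equals a polynomial in $(x-y)$ and $1/t$ times the Gaussian, hence is dominated uniformly (in $x$ and $t\in[a,b]$) by a function integrable in $y$; since $f$ is bounded, dominated convergence licenses differentiation to all orders. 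This yields $u \in C^{\infty}(\mathbb{R}\times(0,\infty))$ and, because $G$ solves the equation, $\partial_t u = \partial_{xx} u$.

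Next I would establish the initial condition $u(x,t)\to f(x)$ as $t\downarrow 0$ using the approximate-identity property of the heat kernel: $\int_{\mathbb{R}} G(z,t)\,dz = 1$ for every $t>0$, while for each fixed $\delta>0$ the tail mass $\int_{|z|>\delta} G(z,t)\,dz$ tends to $0$ as $t\downarrow 0$. Writing
\[
u(x,t)-f(x) = \int_{\mathbb{R}}\big(f(x-z)-f(x)\big)\,G(z,t)\,dz,
\]
I would split at $|z|\le\delta$ and $|z|>\delta$: continuity of $f$ at $x$ controls the near piece uniformly once $\delta$ is small, and boundedness of $f$ together with the vanishing tail mass controls the far piece. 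This gives convergence, locally uniform in $x$.

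For the second part, the link is the transition density \eqref{td}. By the definition of the law of Brownian motion started at $x$, for bounded measurable $f$,
\[
\mathbb{E}[f(W_t)\mid W_0 = x] = \int_{\mathbb{R}} f(y)\,p(t,x,y)\,dy,
\]
the integral being finite since $f\in L^{\infty}(\mathbb{R})$. The remaining task is to match the Gaussian density $p(t,x,y)$ to the kernel $G(x-y,t)$ of part~(1); this amounts to reconciling the variance of the increment with the coefficient in \eqref{HeatEquation}, after which the two integral expressions for $u(x,t)$ coincide and the representation follows.

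The main obstacle I anticipate is in the first part: making the differentiation-under-the-integral argument fully rigorous requires exhibiting, for each order of derivative, an explicit integrable dominating function that is uniform for $t$ in compact subsets of $(0,\infty)$. The factors of $1/t$ force these bounds to blow up as $t\downarrow 0$, which is precisely why smoothness is claimed only on $(0,\infty)$ and why the initial data must be recovered through the separate limiting argument rather than by passing to the limit inside the integrand. The approximate-identity estimate for the initial condition is the other delicate point, but it is routine once $f$ is assumed continuous and bounded.
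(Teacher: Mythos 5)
Your treatment of part~(1) is correct and in fact more complete than the paper's: where the paper simply asserts that direct differentiation and dominated convergence do the job and defers to Evans, you supply the actual domination of $\partial_x^{\,j}\partial_t^{\,k}G$ by Gaussians uniformly for $t$ in compact subsets of $(0,\infty)$, and you recover the initial data by the approximate-identity splitting at $|z|\le\delta$ versus $|z|>\delta$. That is the right argument and nothing is missing there.

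Part~(2), however, contains a genuine gap at exactly the step you postpone. You write $\mathbb{E}[f(W_t)\mid W_0=x]=\int_{\mathbb{R}}f(y)\,p(t,x,y)\,dy$ with $p$ as in \eqref{td}, and then say the remaining task is to ``reconcile the variance of the increment with the coefficient in \eqref{HeatEquation}, after which the two integral expressions for $u(x,t)$ coincide.'' They do not coincide, and no reconciliation at the same time $t$ is possible: $p(t,x,y)=\frac{1}{\sqrt{2\pi t}}\exp\bigl(-\frac{(y-x)^2}{2t}\bigr)$ is the $\mathcal{N}(x,t)$ density, whereas the kernel $G(x-y,t)=\frac{1}{\sqrt{4\pi t}}\exp\bigl(-\frac{(y-x)^2}{4t}\bigr)$ appearing in part~(1) is the $\mathcal{N}(x,2t)$ density. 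These are different probability densities for every $t>0$, so the identity $u(x,t)=\mathbb{E}[f(W_t)\mid W_0=x]$ fails for a standard Brownian motion and the equation $\partial_t u=\partial_{xx}u$; the correct matching is $u(x,t)=\mathbb{E}[f(W_{2t})\mid W_0=x]$ (equivalently, replace $W$ by $x+\sqrt{2}\,W_t$, or replace the PDE by $\partial_t u=\tfrac12\partial_{xx}u$, which is the normalization used in Corollary~\ref{cor:BrownianHeatDensity}). To be fair, the paper's own proof has the same defect --- it silently declares that the Brownian motion of Theorem~\ref{th:exis-BM} has transition density equal to the $4t$-kernel, contradicting \eqref{td} --- but your proof makes the unresolved factor of $2$ explicit and then asserts it away, so the final step as written would fail.
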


\begin{proof}
    The fundamental solution of \eqref{HeatEquation} for the initial \(u(x, 0) = \delta(x - x_0)\) is given by normal Kernel
    \begin{equation}\label{kernel}
        \phi(x, t; x_0) = \frac{1}{\sqrt{4\pi t}} \exp\left(-\frac{(x - x_0)^2}{4t}\right),
    \end{equation}
    where \(\delta(x - x_0)\) is the Dirac delta function at \(x_0\). This Kernel satisfies the equation 
    \[
    \frac{\partial \phi}{\partial t} = \frac{\partial^2 \phi}{\partial x^2}, \quad \phi(x, 0; x_0) = \delta(x - x_0).
    \]
    For a general initial condition \(u(x, 0) = f(x)\), the solution is obtained by convolution with the fundamental solution
    \begin{equation}\label{solConv}
    u(x, t) = \int_{\mathbb{R}} \phi(x, t; y) f(y) \, dy.  
    \end{equation}
    Substituting \eqref{kernel} in \eqref{solConv} , we obtain
    \begin{equation}\label{clasical-solution}
     u(x, t) = \frac{1}{\sqrt{4\pi t}} \int_{\mathbb{R}} f(y) \exp\left(-\frac{(x - y)^2}{4t}\right) \, dy.   
    \end{equation}
By direct differentiation, it can be verified that \(u(x, t)\) satisfies \eqref{HeatEquation}. The initial condition \(u(x, 0) = f(x)\) holds due to the dominated convergence theorem. Therefore, there exists a classical solution given by \eqref{clasical-solution}, for more details see \cite{evans2010pde}.

On the other hand, from Theorem \ref{th:exis-BM}
, we know that there is a stochastic process, a Brownian motion starting at $x$, which we denote by $\{W_t\}_{t \geq 0}$, whose transition density is given by \eqref{kernel}. Therefore, the expected value of $f(W_t)$ given $W_0 = x$ is
\begin{equation}\label{expec-BM}
    \mathbb{E}[f(W_t) \mid W_0 = x] = \int_{\mathbb{R}} f(y) \phi(y, t; x) \, dy.
\end{equation}
From \eqref{kernel}, \eqref{clasical-solution} and \eqref{expec-BM}, we obtain the representation $u(x, t) = \mathbb{E}[f(W_t) \mid W_0 = x]$, which concludes the proof.
\end{proof}

In order to generalize to arbitrary dimensions the result presented in Theorem \ref{th:Conexion-one-dim}, we need to establish a definition of multidimensional Brownian motion.

\begin{definition}{Multidimensional Brownian motion}{}
   Let \(d \in \mathbb{N}\), and consider a probability space \((\Omega, \mathcal{F}, \mathbb{P})\). A \(d\)-dimensional Brownian motion is a stochastic process \(\{B_t\}_{t \geq 0} = \{(B_t^1, B_t^2, \dots, B_t^d)\}_{t \geq 0}\), where each component \(B_t^i\) (\(i = 1, \dots, d\)) is a Brownian motion defined on the probability space \((\Omega, \mathcal{F}, \mathbb{P})\).
\end{definition}

\begin{figure}[htp]
    \centering
    \includegraphics[width=0.6\textwidth, height=0.25\textheight]{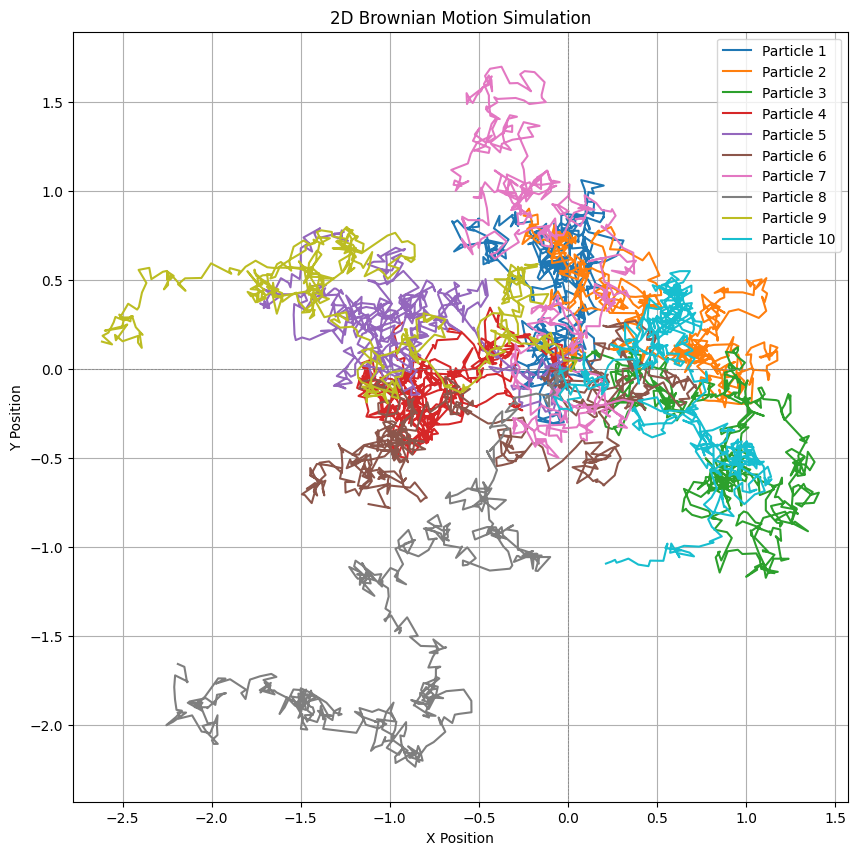}
    \caption[2D Brownian motion simulation showing independent particle trajectories in $x$ and $y$ directions.]{Here is a simulation of 2D Brownian motion. Each line represents the trajectory of a particle undergoing random motion over time in a two-dimensional space. The trajectories illustrate the stochastic nature of the process, with independent movements in the $x$ and $y$ directions.}
    \label{fig:2D-BM}
\end{figure}

\textbf{}

\newpage

\begin{theorem}{}{conexion-multi-dim}
Let \(\{W_t\}_{t \geq 0}\) be a \(d\)-dimensional Brownian motion starting at $x$ defined on a probability space \((\Omega, \mathcal{F}, \mathbb{P})\), and and assume that \(f \in C(\mathbb{R}^d)\cap L^{\infty}( \mathbb{R}^d)\). Furthermore, Consider the \(d\)-dimensional heat equation
\[
\frac{\partial u}{\partial t} = \Delta u, \quad x \in \mathbb{R}^d, \, t > 0,
\]
with initial condition \(u(x, 0) = f(x)\), where \(\Delta = \sum_{i=1}^d \frac{\partial^2}{\partial x_i^2}\) is the Laplace operator. Then:

\begin{enumerate}
    \item There exists a classical solution \(u\in C^{\infty}(\mathbb{R}^d\times(0,\infty))\) given by
    \[
    u(x, t) = \frac{1}{(4\pi t)^{d/2}} \int_{\mathbb{R}^d} f(y) \exp\left(-\frac{\|x - y\|^2}{4t}\right) \, dy, \quad t > 0.
    \]

    \item The solution \(u(x, t)\) admits a stochastic representation in terms of \(d\)-dimensional Brownian motion
    \[
    u(x, t) = \mathbb{E}[f(W_t) \mid W_0 = x],
    \]
    where \(\mathbb{E}\) denotes expectation with respect to the probability measure induced by the Brownian motion \(\{W_t\}_{t \geq 0}\).
\end{enumerate}   
\end{theorem}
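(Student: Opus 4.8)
The plan is to mirror the one-dimensional argument from Theorem~\ref{th:Conexion-one-dim}, exploiting the product structure of the $d$-dimensional heat kernel. The key observation is that the fundamental solution of the $d$-dimensional heat equation factorizes as a product of one-dimensional kernels,
\[
\Phi(x,t;x_0) = \frac{1}{(4\pi t)^{d/2}}\exp\left(-\frac{\|x-x_0\|^2}{4t}\right) = \prod_{i=1}^d \frac{1}{\sqrt{4\pi t}}\exp\left(-\frac{(x_i - x_0^i)^2}{4t}\right),
\]
which follows simply from $\|x-x_0\|^2 = \sum_{i=1}^d (x_i - x_0^i)^2$ and the exponential of a sum being a product of exponentials.

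\medskip

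\noindent\textbf{Part (1): the classical solution.} First I would verify that $\Phi$ solves the heat equation. Since $\Delta = \sum_i \partial_{x_i}^2$ acts coordinatewise and each factor of $\Phi$ is the one-dimensional kernel, a direct computation (or the Leibniz rule applied to the product) shows $\partial_t \Phi = \Delta \Phi$; one can also argue that each factor satisfies its own one-dimensional heat equation and sum the contributions. The candidate solution is then the convolution
\[
u(x,t) = \int_{\mathbb{R}^d} \Phi(x,t;y)\, f(y)\, dy.
\]
Differentiation under the integral sign is justified because $f\in L^\infty(\mathbb{R}^d)$ and the Gaussian kernel together with all its $x$- and $t$-derivatives is integrable and dominated uniformly on compact subsets of $\mathbb{R}^d\times(0,\infty)$; this yields $u\in C^\infty$ and $\partial_t u = \Delta u$. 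The initial condition $u(x,0)=f(x)$ is recovered, for $f$ continuous and bounded, by a standard approximate-identity argument: as $t\to 0^+$ the kernel concentrates at $x$, and dominated convergence (using $\|f\|_\infty$ as the dominating constant after the substitution $y = x + 2\sqrt{t}\,z$) gives $u(x,t)\to f(x)$.

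\medskip

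\noindent\textbf{Part (2): the stochastic representation.} By the definition of $d$-dimensional Brownian motion, the components $W_t^1,\dots,W_t^d$ are independent one-dimensional Brownian motions, so the transition density of $W_t$ starting at $x$ is exactly the product $\prod_i p(t,x_i,y_i)$, which equals $\Phi(x,t;y)$ after matching the variance normalizations. Hence
\[
\mathbb{E}[f(W_t)\mid W_0 = x] = \int_{\mathbb{R}^d} f(y)\, \Phi(y,t;x)\, dy,
\]
and since $\Phi$ is symmetric in its first and third arguments, this integral coincides with the convolution formula from Part (1). Comparing the two expressions yields $u(x,t) = \mathbb{E}[f(W_t)\mid W_0 = x]$.

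\medskip

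\noindent The main obstacle, as in the one-dimensional case, is the rigorous justification of differentiating under the integral sign and of the limit $t\to 0^+$; everything else reduces cleanly to the one-dimensional theorem via the tensor-product structure of both the heat kernel and the law of $d$-dimensional Brownian motion. I would cite \cite{evans2010pde} for the PDE regularity details and invoke Theorem~\ref{th:Conexion-one-dim} componentwise rather than reproving the analytic estimates.
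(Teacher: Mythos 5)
Your proposal is correct and follows essentially the same route as the paper: the paper omits a proof of the multidimensional statement, and its one-dimensional proof (Theorem \ref{th:Conexion-one-dim}) is exactly the convolution-with-the-fundamental-solution argument that you tensorize, so nothing new is needed beyond the product structure of the kernel and the independence of the components of $W_t$. The one step to watch is your phrase ``after matching the variance normalizations'': the product of the one-dimensional transition densities is $(2\pi t)^{-d/2}\exp\left(-\|y-x\|^2/(2t)\right)$, which is the fundamental solution of $\partial_t u=\tfrac12\Delta u$, not of $\partial_t u=\Delta u$, so identifying it with $\Phi(x,t;y)$ genuinely requires the time change $t\mapsto 2t$ (or replacing $W_t$ by $\sqrt{2}\,W_t$) --- a normalization mismatch that is already present in the paper's own statement and one-dimensional proof, and is consistent with the factor $\tfrac12$ that correctly appears in Corollary \ref{cor:BrownianHeatDensity}.
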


Therefore, the result presented in Theorem \ref{th:conexion-multi-dim} is the multidimensional generalization of the connection between Brownian motion and the heat equation presented in Theorem \ref{th:Conexion-one-dim}. Furthermore, from this result follows the interesting, and historically important, relation between the density function of Brownian motion and the heat equation. In the following Lemma we establish this relation.

\begin{corollary}{}{BrownianHeatDensity}
Let \(\{W_t\}_{t \geq 0}\) be a \(d\)-dimensional Brownian motion starting at $x$, then its transition density $p_x(t, y)$ satisfies the equation 
\begin{equation}\label{relation-BM}
 \frac{\partial p}{\partial t} = \frac{1}{2}\,\Delta p, \quad x \in \mathbb{R}^d, \, t > 0,   
\end{equation}
with initial condition $p(0,x, y)=\delta(y-x).$ 
\end{corollary}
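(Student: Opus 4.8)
The plan is to avoid a brute-force differentiation of the Gaussian by exploiting the heat kernel already identified in Theorem~\ref{th:conexion-multi-dim} together with a simple time rescaling. First I would write down the transition density of the $d$-dimensional Brownian motion explicitly: since the components are independent one-dimensional Brownian motions, the Proposition yields the product form
\[
p_x(t,y)=\prod_{i=1}^d \frac{1}{\sqrt{2\pi t}}\exp\!\left(-\frac{(y_i-x_i)^2}{2t}\right)=\frac{1}{(2\pi t)^{d/2}}\exp\!\left(-\frac{\|y-x\|^2}{2t}\right).
\]

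The key observation is the scaling identity $p_x(t,y)=\Phi(x,t/2;y)$, where $\Phi(x,s;y)=(4\pi s)^{-d/2}\exp(-\|x-y\|^2/(4s))$ is the fundamental solution of the heat equation $\partial_s \Phi=\Delta_x\Phi$ appearing in Theorem~\ref{th:conexion-multi-dim}; indeed, substituting $s=t/2$ into $\Phi$ reproduces exactly the Gaussian above. Because $\Phi$ depends on $x$ and $y$ only through $\|x-y\|$, its Laplacian in $x$ coincides with its Laplacian in $y$, so there is no ambiguity in writing $\Delta p$.

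With this identity in hand, the PDE follows from the chain rule: differentiating $p_x(t,y)=\Phi(x,t/2;y)$ in $t$ and using that $\Phi$ solves the heat equation in the pair $(s,x)$ gives
\[
\frac{\partial p}{\partial t}(x,t;y)=\frac{1}{2}\,\frac{\partial \Phi}{\partial s}\Big|_{s=t/2}=\frac{1}{2}\,\Delta_x\Phi\Big|_{s=t/2}=\frac{1}{2}\,\Delta p(x,t;y),
\]
which is precisely \eqref{relation-BM}. The factor $\tfrac{1}{2}$ is exactly the discrepancy between the probabilist's normalization (variance $t$, hence the $2t$ in the exponent) and the analyst's heat equation (the $4t$ in the kernel of Theorem~\ref{th:conexion-multi-dim}), and the rescaling makes its origin transparent rather than leaving it buried inside a differentiation.

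Finally I would verify the initial condition. As $t\downarrow 0$ the Gaussian $p_x(t,\cdot)$ is an approximate identity: it is nonnegative, integrates to $1$ for every $t>0$, and concentrates its mass near $x$. Hence for any bounded continuous test function $g$ one has $\int_{\mathbb{R}^d} g(y)\,p_x(t,y)\,dy\to g(x)$, that is, $p_x(t,\cdot)\to\delta(\,\cdot-x)$ in the sense of distributions, which is the meaning of $p(0,x,y)=\delta(y-x)$. I do not expect a genuine obstacle here; the only points requiring care are bookkeeping the variable in which the Laplacian acts and being explicit that the delta condition is a distributional limit rather than a pointwise one.
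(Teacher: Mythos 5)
Your proof is correct. Note that the paper does not actually supply a proof of this corollary: it is stated as a consequence of Theorem~\ref{th:conexion-multi-dim}, with all such proofs deferred to the cited references, so there is nothing to match your argument against step by step. What you have written is a clean and self-contained way to make that deduction explicit. Factoring the transition density into one-dimensional Gaussians, identifying it with the heat kernel of Theorem~\ref{th:conexion-multi-dim} at the rescaled time $s=t/2$, and then reading off $\partial_t p=\tfrac12\Delta p$ from the chain rule isolates the factor $\tfrac12$ as a pure time rescaling between the probabilist's variance-$t$ Gaussian and the analyst's variance-$2s$ kernel; this is more transparent than the brute-force differentiation on which the paper's one-dimensional argument (Theorem~\ref{th:Conexion-one-dim}) implicitly relies, and it sidesteps the paper's conflation of the two normalizations. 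The only ingredient you import without proof is that the $d$-dimensional kernel $\Phi$ satisfies $\partial_s\Phi=\Delta\Phi$, which the paper only verifies for $d=1$; either cite it for general $d$ or observe that it follows from the same product structure you already used for $p$. Your remarks that $\Delta_x\Phi=\Delta_y\Phi$ (so the Laplacian may be taken in either variable) and that the initial condition is a distributional limit via the approximate-identity property close the remaining gaps; nothing essential is missing.
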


This results reveals a scale-dependent interpretation of Brownian motion. At the microscopic level, the stochastic trajectories describe individual particle displacements influenced by local randomness. However, at the macroscopic level, the probability density function evolves smoothly according to the deterministic heat equation, which governs the diffusion of particle concentration over time. The heat equation thus emerges as the macroscopic law that describes the large-scale behavior of a stochastic system whose local dynamics are driven by Brownian motion. This connection is fundamental in stochastic analysis and establishes a bridge between probability theory and PDEs, playing a crucial role in diffusion processes, statistical physics, and potential theory. 

\begin{figure}[htbp]
    \centering
    \includegraphics[width=0.95\textwidth, height=0.25\textheight]{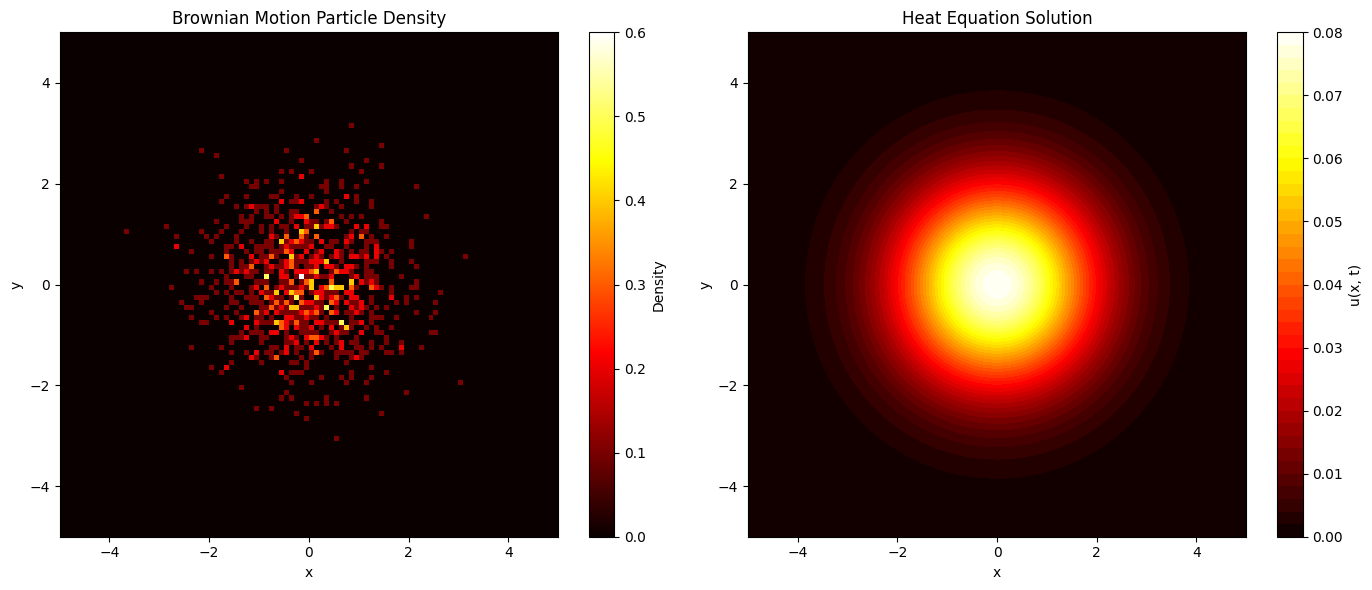}
    \caption[Comparison of 2D Brownian motion density and heat equation solution, both showing Gaussian-like distributions centered at the origin.]{The simulation compares 2D Brownian motion particle density (left) and the 2D heat equation solution (right), showing their equivalence as both form Gaussian-like distributions centered at the origin.}
    \label{fig:Conexion-MB-HE}
\end{figure}

We numerically illustrate the relationship, in Figure \ref{fig:Conexion-MB-HE} compares the empirical density of a two-dimensional Brownian motion (left) with the numerical solution of the corresponding heat equation (right). The empirical density is estimated from the trajectories of $N$ independent Brownian particles using a two-dimensional histogram, while the theoretical density is obtained by solving \eqref{relation-BM}. Both densities exhibit a Gaussian-like profile centered at the origin, validating the well-known result that the heat equation governs the macroscopic evolution of Brownian motion probability densities.

While Brownian motion serves as the fundamental model for stochastic dynamics, many natural and applied systems exhibit structured randomness beyond pure Brownian fluctuations. In particular, real-world stochastic processes often incorporate {deterministic drift}, governing systematic trends, and {state-dependent volatility}, modulating stochastic fluctuations. A natural extension is given by \textit{diffusion processes}, which generalize Brownian motion by satisfying the stochastic differential equation (SDE)
\[
dX_t = b(X_t)\,\mathrm{d}t + \sigma(X_t) \,\mathrm{d}W_t,
\]
where \( b(X_t) \) represents the deterministic drift, and \( \sigma(X_t) \) is the diffusion coefficient controlling local variability. These processes arise in diverse applications, from {physics} (particle transport, thermodynamics) to {finance} (stochastic modeling of asset prices) and {biology} (population dynamics, neural activity).   The rigorous mathematical formulation of diffusion processes requires a well-defined framework for {stochastic integration and differentiation}, which extends classical calculus to non-differentiable paths. This leads to the development of \textit{Itô calculus}, a fundamental tool for analyzing stochastic dynamics. The next section develops the \textit{Itô integral} and the \textit{Itô formula} and their role in the theory of stochastic differential equations, fundamental tools in the study of stochastic processes.

\section{Itô Calculus}

Building on the foundation of Brownian motion and its connection to the heat equation, we now transition to the study of Itô calculus. Itô calculus provides the necessary framework for rigorously defining stochastic integrals and differential equations that describe the evolution of processes with both deterministic drift and stochastic components. 

\vspace{0.2cm}

  Let \((\Omega, \mathcal{F}, \mathbb{P})\) be a probability space equipped with a filtration \((\mathcal{F}_t)_{t \geq 0}\) satisfying the usual conditions (increasing, right-continuous, and complete). Let \((W_t)_{t \geq 0}\) be a standard Brownian motion adapted to \((\mathcal{F}_t)\). 
\begin{definition}{Stochastic Integral}{}
   For a process \(f: [0, T] \times \Omega \to \mathbb{R}\) that is \(\mathcal{F}_t\)-adapted and belongs to the space \(L^2([0, T] \times \Omega)\), i.e.,

\[
\mathbb{E} \left[ \int_0^T |f(t, \omega)|^2 \, \mathrm{d}t \right] < \infty,
\]
the \textbf{Itô integral} of \(f\) with respect to \(W_t\) over \([0, T]\) is defined as the limit of stochastic sums
\[
I(f) = \int_0^T f(t) \, \mathrm{d}W_t := \lim_{|\mathcal{P}| \to 0} \sum_{i=0}^{n-1} f(t_i)(W_{t_{i+1}} - W_{t_i}),\quad \textrm{in probability},
\]

where \(\mathcal{P} = \{0 = t_0 < t_1 < \cdots < t_n = T\}\) is a partition of \([0, T]\), \(|\mathcal{P}|\) is the maximum size of the subintervals \([t_i, t_{i+1}]\), and the evaluation points \(t_i\) satisfy \(t_i \in [t_i, t_{i+1})\).
\end{definition}

\begin{theorem}{Fundamental Properties of the Itô Integral}{}
Let \(f, g \in L^2([0, T] \times \Omega)\) be processes adapted to \(\mathcal{F}_t\), and let \(a, b \in \mathbb{R}\). Then

   \begin{enumerate}
    \item \textbf{Linearity.} The Itô integral is linear, i.e., 
    \[
    \int_0^T \big(a f(t) + b g(t)\big) \, \mathrm{d}W_t = a \int_0^T f(t) \, \mathrm{d}W_t + b \int_0^T g(t) \, \mathrm{d}W_t.
    \]

    \item \textbf{Isometry.} The Itô integral satisfies the isometry property, i.e., 
    \[
    \mathbb{E}\left[\left(\int_0^T f(t) \, \mathrm{d}W_t\right)^2\right] = \mathbb{E}\left[\int_0^T f(t)^2 \, \mathrm{d}t\right].
    \]

    \item \textbf{Martingale Property.} The process
    \[
    M_t = \int_0^t f(s) \, \mathrm{d}W_s, \quad t \in [0, T],
    \]
    is a martingale with respect to the filtration \((\mathcal{F}_t)\).

    \item \textbf{Quadratic Variation.} The quadratic variation of the Itô integral is given by
    \[
    \left[\int_0^t f(s) \, \mathrm{d}W_s\right]_t = \int_0^t f(s)^2 \, \mathrm{d}s.
    \]

    \item \textbf{Zero-Mean Property.} The expectation of the Itô integral is zero, i.e., 
    \[
    \mathbb{E}\left[\int_0^T f(t) \, \mathrm{d}W_t\right] = 0.
    \]
\end{enumerate}
\end{theorem}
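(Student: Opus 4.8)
The plan is to build every property from the ground up, starting with \emph{simple} (elementary) adapted processes, for which the Itô integral is literally the finite sum appearing in the definition, and then extending to all of $L^2([0,T]\times\Omega)$ by a density argument anchored on the isometry. For a simple process $f=\sum_{i=0}^{n-1}\xi_i\mathbf{1}_{[t_i,t_{i+1})}$ with each $\xi_i$ bounded and $\mathcal{F}_{t_i}$-measurable, the integral is $I(f)=\sum_i\xi_i(W_{t_{i+1}}-W_{t_i})$, and linearity (property 1) is immediate from the definition of the sum and carries over to the limit by the continuity of the integral map. So the real content lies in the isometry, from which the remaining properties and the very construction of the integral will follow.

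First I would prove the isometry (property 2) for simple processes, which is the computational heart of the argument. Expanding $I(f)^2=\sum_{i,j}\xi_i\xi_j(W_{t_{i+1}}-W_{t_i})(W_{t_{j+1}}-W_{t_j})$ and taking expectations, I would split into diagonal and off-diagonal terms. For a cross term with $i<j$, conditioning on $\mathcal{F}_{t_j}$ leaves $\xi_i\xi_j(W_{t_{i+1}}-W_{t_i})$ measurable while $W_{t_{j+1}}-W_{t_j}$ is independent of $\mathcal{F}_{t_j}$ with mean zero, so the term vanishes by the tower property. For a diagonal term, conditioning on $\mathcal{F}_{t_i}$ and using that the increment $W_{t_{i+1}}-W_{t_i}$ is independent of $\mathcal{F}_{t_i}$ with variance $t_{i+1}-t_i$ yields $\mathbb{E}[\xi_i^2](t_{i+1}-t_i)$; summing gives exactly $\mathbb{E}\big[\int_0^T f^2\,\mathrm{d}t\big]$. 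The three ingredients — adaptedness of $\xi_i$, independence of increments, and their Gaussian variance $t_{i+1}-t_i$ — are precisely what the definition of Brownian motion supplies.

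Next I would extend the integral to arbitrary $f\in L^2([0,T]\times\Omega)$. The isometry shows $f\mapsto I(f)$ is a linear isometry from the subspace of simple processes into $L^2(\Omega)$; by the completeness of $L^2(\Omega)$ it extends uniquely to the $L^2$-closure of the simple processes. The crucial point — and the step I expect to be the main obstacle — is establishing that the simple adapted processes are \emph{dense} in $L^2([0,T]\times\Omega)$. This is not a purely measure-theoretic fact, because the approximating step functions must remain \emph{adapted}; the standard route is a three-stage approximation (first by bounded processes, then by bounded continuous processes, then by step functions evaluated at left endpoints), with care taken that each stage preserves measurability with respect to $(\mathcal{F}_t)$. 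Once density is in hand, the isometry passes to the limit, giving property 2 in full generality.

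Finally, the remaining properties transfer to the limit. For the martingale property (3), I would verify for simple processes that $\mathbb{E}[M_t\mid\mathcal{F}_s]=M_s$ by conditioning increments lying beyond $s$ — each has conditional mean zero — and then use that $L^2$ convergence is preserved under conditional expectation (a contraction on $L^2$) to conclude $M_t$ is a martingale for general $f$; the zero-mean property (5) is then just $\mathbb{E}[M_T]=\mathbb{E}[M_0]=0$. For the quadratic variation (4), I would show that $M_t^2-\int_0^t f^2\,\mathrm{d}s$ is a martingale — essentially the process form of the isometry, obtained by the same diagonal/cross-term conditioning applied to the increments of $M^2$ — which identifies $\int_0^t f^2\,\mathrm{d}s$ as the unique predictable increasing compensator, that is, the quadratic variation.
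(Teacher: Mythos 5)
The paper states this theorem without proof, deferring to the cited references, and your proposal is precisely the standard construction found there --- define the integral on simple adapted processes, prove the isometry by the diagonal/cross-term conditioning argument, establish density of simple processes in $L^2([0,T]\times\Omega)$, extend by completeness, and transfer each property through the $L^2$ limit --- so it is correct and there is nothing to contrast. The one point worth a line of care is property 4: your martingale argument identifies the predictable compensator $\langle M\rangle_t$ rather than the pathwise quadratic variation $[M]_t$ defined as a limit of sums of squared increments; for continuous $L^2$-martingales the two coincide, so the conclusion stands, but that identification should be stated or cited explicitly.
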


Subsequently, we present the definition of the multi-dimensional Itô integral as follows:

\begin{definition}{Multi-dimensional Itô Integral}{}
 let $(W_t)_{t \geq 0}$ be a standard $d$-dimensional Brownian motion. Furthermore, let $f(t, \omega): [0, T] \times \Omega \to \mathbb{R}^{n \times d}$ be a matrix of $\mathcal{F}_t$-adapted processes such that each entry $f_{ij}(t, \omega)$ belongs to the space $L^2([0, T] \times \Omega)$, i.e.,
 \[
\mathbb{E}\left[ \int_0^T \| f(t) \|_F^2 \, \mathrm{d}t \right] < \infty,
\]
where $\| f(t) \|_F$ denotes the Frobenius norm of the matrix $f(t)$. Then the \textbf{multi-dimensional Itô integral} is defined as:
\[
\int_0^T f(t) \, \mathrm{d}W_t = 
\begin{pmatrix}
\sum\limits_{j=1}^d \int\limits_0^T f_{1j}(t) \, \mathrm{d}W_t^j, \dots, 
\sum\limits_{j=1}^d \int\limits_0^T f_{nj}(t) \, \mathrm{d}W_t^j
\end{pmatrix}^\top,
\]
where each term is a sum of scalar Itô integrals.
\end{definition}

Brownian motion serves as the fundamental model for continuous-time stochastic processes, but many real-world phenomena exhibit structured randomness beyond its purely diffusive nature. Itô processes generalize Brownian motion by incorporating a deterministic drift term and a state-dependent diffusion coefficient. The rigorous definition of Itô processes relies on Itô integration, which provides a well-defined framework for handling stochastic integrals. In this section, we present the formal definition of an Itô process and establish its properties within the broader framework of stochastic calculus.

\begin{definition}{Itô Process}{}
    Let \((\Omega, \mathcal{F}, \{\mathcal{F}_t\}_{t \geq 0}, \mathbb{P})\) be a filtered probability space and  let \(W_t \) be a standard \(m\)-dimensional Brownian motion adapted to the filtration \(\{\mathcal{F}_t\}_{t \geq 0}\). A vector-valued stochastic process \(\{X_t\}_{t \geq 0} \in \mathbb{R}^d\), defined on \((\Omega, \mathcal{F}, \mathbb{P})\), is called a \textbf{$d$-dimensional Itô process} if it can be expressed as
    \begin{equation}\label{IP}
      X_t = X_0 + \int_0^t \mu(s, X_s) \, \mathrm{d}s + \int_0^t \sigma(s, X_s) \, \mathrm{d}W_s, \quad t \geq 0,  
    \end{equation}
for some \(\mathcal{F}_0\)-measurable random vector \(X_0 \in \mathbb{R}^d\), and for some functions \(\mu: \mathbb{R}_+ \times \mathbb{R}^d \to \mathbb{R}^d\) (called the \textit{drift vector}) and \(\sigma: \mathbb{R}_+ \times \mathbb{R}^d \to \mathbb{R}^{d \times m}\) (called the \textit{diffusion matrix}) such that all the expressions are well-defined, i.e., measurable functions such that
$$\int_0^T \|\mu(s, X_s)\| \, \mathrm{d}s < \infty \quad \textrm{and} \quad \int_0^T \|\sigma(s, X_s)\|_F^2 \, \mathrm{d}s < \infty,\,\,\, \mathbb{P}-\textbf{a.s.}, \,\,\,\textrm{for all}\,\,\, T > 0.$$
\end{definition}
\

If \(\{X_t\}_{t \geq 0}\) is an Itô Process of the form \eqref{IP}  is sometimes written in the shorter differential form, Stochastic Differential Equation (SDE), given by 
\[
\mathrm{d}X_t = \mu(t, X_t) \, \mathrm{d}t + \sigma(t, X_t) \, \mathrm{d}W_t.
\]

\newpage

\begin{theorem}{Itô's Formula}{}
    Let \(\{X_t\}_{t \geq 0}\) be a \(d\)-dimensional Itô process given by
\[
\mathrm{d}X_t = \mu(t, X_t) \, \mathrm{d}t + \sigma(t, X_t) \, \mathrm{d}W_t,
\]
Let \(f(t,x)=(f_1(t,x),\cdots f_p(t,x))\in C^{2}([0,\infty)\times\mathbb{R}^d,\,\mathbb{R}^p)\), i.e., twice continuously differentiable on $[0,\infty)\times\mathbb{R}^d)$. Then $Y_t=f(t, X_t)$ is again an Itô process, and
\[
\mathrm{d}Y_t
= \frac{\partial f}{\partial t}(t, X_t) \, \mathrm{d}t 
+ \nabla f(t, X_t)^\top \, \mathrm{d}X_t 
+ \frac{1}{2} \operatorname{Tr}\left( \sigma(t, X_t) \sigma(t, X_t)^\top \nabla^2 f(t, X_t) \right) \, \mathrm{d}t,
\]
where \(\nabla f(t, X_t) \in \mathbb{R}^d\) and \(\nabla^2 f(t, X_t) \in \mathbb{R}^{d \times d}\) is Gradient and Hessian matrix of \(f\) with respect to \(x\), respectively. Therefore, expanding \(\mathrm{d}X_t\) gives
\[
\mathrm{d}Y_t = \left[\frac{\partial f}{\partial t} + \nabla f^\top \mu + \frac{1}{2} \operatorname{Tr}\left(\sigma \sigma^\top \nabla^2 f\right)\right] \, \mathrm{d}t
+ \nabla f^\top \sigma \, \mathrm{d}W_t.
\]
\end{theorem}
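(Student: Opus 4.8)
The plan is to establish the formula componentwise, so it suffices to treat scalar-valued $f \in C^2([0,\infty) \times \mathbb{R}^d)$; the vector case follows by applying the scalar result to each $f_\ell$. A standard first reduction is \emph{localization}: introduce stopping times $\tau_R = \inf\{t : \|X_t\| \geq R\}$ and work on $[0, t \wedge \tau_R]$, so that $X_s$ stays in a compact set on which $f$ and its first and second derivatives are bounded and uniformly continuous; after proving the identity on each such interval one lets $R \to \infty$, using $\tau_R \to \infty$ almost surely and dominated convergence. One may also first assume $\mu, \sigma$ are bounded simple (step) processes and recover the general case through the $L^2$-approximation underlying the construction of the Itô integral.

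The core of the argument is a second-order Taylor expansion along a refining sequence of partitions $\mathcal{P} = \{0 = t_0 < \cdots < t_n = t\}$ with $|\mathcal{P}| \to 0$. Writing $\Delta_i t = t_{i+1} - t_i$ and $\Delta_i X = X_{t_{i+1}} - X_{t_i}$, I would telescope
\[
f(t, X_t) - f(0, X_0) = \sum_{i=0}^{n-1} \big[ f(t_{i+1}, X_{t_{i+1}}) - f(t_i, X_{t_i}) \big]
\]
and expand each summand to first order in time and second order in space, with a remainder controlled by the modulus of continuity of $\partial_t f$ and $\nabla^2 f$. This produces three principal sums: $\sum_i \partial_t f(t_i, X_{t_i}) \, \Delta_i t$, the term $\sum_i \nabla f(t_i, X_{t_i})^\top \Delta_i X$, and the quadratic sum $\tfrac{1}{2} \sum_i \Delta_i X^\top \nabla^2 f(t_i, X_{t_i}) \, \Delta_i X$. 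The first converges to $\int_0^t \partial_t f \, \mathrm{d}s$, and the second, upon inserting $\Delta_i X \approx \mu \Delta_i t + \sigma \Delta_i W$, converges to $\int_0^t \nabla f^\top \mu \, \mathrm{d}s + \int_0^t \nabla f^\top \sigma \, \mathrm{d}W_s$ by the definition of the Itô integral and continuity of the integrands.

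The decisive term is the quadratic sum. Substituting $\Delta_i X = \mu_i \Delta_i t + \sigma_i \Delta_i W$ (with $\mu_i, \sigma_i$ evaluated at $(t_i, X_{t_i})$) and expanding, the products involving $\Delta_i t$ aggregate to $O(|\mathcal{P}|)$ and vanish, leaving $\tfrac{1}{2} \sum_i (\sigma_i \Delta_i W)^\top \nabla^2 f \, (\sigma_i \Delta_i W) = \tfrac{1}{2} \sum_i \sum_{j,k} (\sigma_i^\top \nabla^2 f \, \sigma_i)_{jk} \, \Delta_i W^j \Delta_i W^k$. The crux is the replacement of $\Delta_i W^j \Delta_i W^k$ by $\delta_{jk} \Delta_i t$: I would show that $\sum_i H_i (\Delta_i W^j \Delta_i W^k - \delta_{jk} \Delta_i t) \to 0$ in $L^2$ for any bounded $\mathcal{F}_{t_i}$-measurable $H_i$. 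Here lies the main obstacle and the essential use of the Brownian structure. Because the increments are independent of $\mathcal{F}_{t_i}$ with $\mathbb{E}[\Delta_i W^j \Delta_i W^k] = \delta_{jk} \Delta_i t$ and fourth moments of order $(\Delta_i t)^2$, the cross terms in the expanded $L^2$ norm vanish by conditioning on the past (adaptedness), while the diagonal terms sum to $O(|\mathcal{P}|\, t) \to 0$. This identifies the limit of the quadratic sum as $\tfrac{1}{2} \int_0^t \sum_{j,k} (\sigma^\top \nabla^2 f \, \sigma)_{jk} \, \delta_{jk} \, \mathrm{d}s = \tfrac{1}{2} \int_0^t \operatorname{Tr}(\sigma \sigma^\top \nabla^2 f) \, \mathrm{d}s$, using the cyclic invariance of the trace.

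Finally I would collect the three limits, confirm that the Taylor remainders vanish in probability using the almost-sure continuity of $s \mapsto X_s$ together with the uniform continuity of the derivatives on the localized compact set, and then remove the localization by sending $R \to \infty$. Assembling the drift and diffusion pieces yields the differential form $\mathrm{d}Y_t = \big[\partial_t f + \nabla f^\top \mu + \tfrac{1}{2}\operatorname{Tr}(\sigma\sigma^\top \nabla^2 f)\big]\,\mathrm{d}t + \nabla f^\top \sigma\, \mathrm{d}W_t$, and in particular exhibits $Y_t$ as an Itô process with the stated drift and diffusion coefficients.
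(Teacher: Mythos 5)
The paper itself gives no proof of this theorem (it defers to the classical references \cite{oksendal, kuo, Protter2005}), so there is nothing to compare against line by line; your sketch is precisely the standard argument found in those references. It is correct in all essentials — componentwise reduction, localization, reduction to simple integrands, the telescoping Taylor expansion, and above all the $L^2$ replacement of $\Delta_i W^j \Delta_i W^k$ by $\delta_{jk}\Delta_i t$ via conditioning on $\mathcal{F}_{t_i}$, which is indeed the crux — and the final identification $\operatorname{Tr}(\sigma^\top \nabla^2 f\,\sigma) = \operatorname{Tr}(\sigma\sigma^\top \nabla^2 f)$ is handled correctly.
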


\begin{theorem}{Itô's Integration by Parts}{}
Let \(\{X_t\}_{t \geq 0}\) and \(\{Y_t\}_{t \geq 0}\) be two Itô processes defined by
\[
\mathrm{d}X_t = \mu_X(t, X_t) \, \mathrm{d}t + \sigma_X(t, X_t) \, \mathrm{d}W_t, \quad
\mathrm{d}Y_t = \mu_Y(t, Y_t) \, \mathrm{d}t + \sigma_Y(t, Y_t) \, \mathrm{d}W_t,
\]
where \(W_t\) is a Brownian motion. Then, the product \(Z_t = X_t Y_t\) satisfies
\[
\mathrm{d}(X_t Y_t) = X_t \, \mathrm{d}Y_t + Y_t \, \mathrm{d}X_t + \mathrm{d}\langle X, Y \rangle_t,
\]
where \(\mathrm{d}\langle X, Y \rangle_t\) is the differential of the quadratic covariation process of \(X_t\) and \(Y_t\), given by
\[
\mathrm{d}\langle X, Y \rangle_t = \sigma_X(t, X_t) \sigma_Y(t, Y_t)^\top \, \mathrm{d}t.
\]
Therefore, expanding the terms explicitly gives
\begin{align*}
\mathrm{d}(X_t Y_t) 
&= \big( X_t \mu_Y(t, Y_t) + Y_t \mu_X(t, X_t) 
+ \sigma_X(t, X_t) \sigma_Y(t, Y_t)^\top \big) \, \mathrm{d}t \\
&\quad + X_t \sigma_Y(t, Y_t) \, \mathrm{d}W_t 
+ Y_t \sigma_X(t, X_t) \, \mathrm{d}W_t.
\end{align*}

\end{theorem}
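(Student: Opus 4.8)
The plan is to derive Itô's Integration by Parts as a direct corollary of Itô's Formula applied to the product function, since both $X_t$ and $Y_t$ are driven by the same scalar Brownian motion $W_t$.

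The approach is to apply the multidimensional Itô Formula to the two-dimensional Itô process $\binom{X_t}{Y_t}$ together with the smooth function $f(x,y) = xy$. First I would assemble the joint process into the form required by Itô's Formula: stack the drifts into $\mu = (\mu_X, \mu_Y)^\top$ and the diffusion coefficients into $\sigma = (\sigma_X, \sigma_Y)^\top$, noting that because a single Brownian motion $W_t$ drives both components, $\sigma$ is a $2\times 1$ matrix rather than a $2\times 2$ one. Then I would compute the ingredients needed for the formula: the function $f(x,y)=xy$ has no explicit time dependence, so $\partial f/\partial t = 0$; its gradient is $\nabla f = (y, x)^\top$; and its Hessian is the constant matrix $\nabla^2 f = \bigl(\begin{smallmatrix} 0 & 1 \\ 1 & 0 \end{smallmatrix}\bigr)$.

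The next step is to substitute these into Itô's Formula. The gradient term $\nabla f^\top \, \mathrm{d}X_t$ expands to $Y_t\,\mathrm{d}X_t + X_t\,\mathrm{d}Y_t$, which produces the two classical product-rule terms. The genuinely stochastic contribution comes from the second-order correction: I would compute $\sigma\sigma^\top$ as a $2\times 2$ matrix whose off-diagonal entries are $\sigma_X \sigma_Y$ (here written as $\sigma_X \sigma_Y^\top$ in the matrix-valued notation), then evaluate $\tfrac{1}{2}\operatorname{Tr}(\sigma\sigma^\top \nabla^2 f)$. Because $\nabla^2 f$ is the off-diagonal swap matrix, this trace picks out precisely the cross terms, yielding $\tfrac{1}{2}(\sigma_X\sigma_Y^\top + \sigma_Y\sigma_X^\top) = \sigma_X\sigma_Y^\top$, which is exactly $\mathrm{d}\langle X, Y\rangle_t / \mathrm{d}t$. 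Collecting terms then gives $\mathrm{d}(X_tY_t) = X_t\,\mathrm{d}Y_t + Y_t\,\mathrm{d}X_t + \sigma_X\sigma_Y^\top\,\mathrm{d}t$, which is the claimed identity; the final explicit expansion follows by substituting the definitions of $\mathrm{d}X_t$ and $\mathrm{d}Y_t$.

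The main obstacle I anticipate is bookkeeping rather than conceptual: correctly identifying the quadratic covariation term by making sure the shared Brownian motion is handled properly in the trace computation. The key subtlety is that $\mathrm{d}\langle X, Y\rangle_t$ is nonzero precisely because $X_t$ and $Y_t$ share the same driving $W_t$ — if they were driven by independent Brownian motions the cross-variation would vanish. I would therefore take care to verify that the trace $\operatorname{Tr}(\sigma\sigma^\top\nabla^2 f)$ correctly reproduces the cross-variation $\sigma_X\sigma_Y^\top$ and that the factor of $\tfrac{1}{2}$ cancels against the symmetric double-counting of the two off-diagonal Hessian entries, leaving no spurious factor.
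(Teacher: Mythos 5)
The paper omits the proof of this theorem entirely (it defers all such proofs to the cited references), so there is no in-text argument to compare against; your derivation is the standard one and it is correct. Applying the two-dimensional It\^o formula to $f(x,y)=xy$ with the stacked diffusion matrix $\sigma=(\sigma_X,\sigma_Y)^\top$ does give $\nabla f^\top\,\mathrm{d}(X_t,Y_t)^\top = Y_t\,\mathrm{d}X_t + X_t\,\mathrm{d}Y_t$ for the first-order part and $\tfrac{1}{2}\operatorname{Tr}\left(\sigma\sigma^\top\nabla^2 f\right)=\tfrac{1}{2}\left(\sigma_X\sigma_Y^\top+\sigma_Y\sigma_X^\top\right)=\sigma_X\sigma_Y^\top$ for the correction term, with the factor $\tfrac{1}{2}$ cancelling against the two off-diagonal Hessian entries exactly as you note.
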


\newpage

\begin{theorem}{Existence and uniqueness theorem for SDEs}{existence-uniqueness}
Let \((\Omega, \mathcal{F}, \{\mathcal{F}_t\}_{t \geq 0}, \mathbb{P})\) be a filtered probability space, and let \(\{W_t\}_{t \geq 0}\) be a standard \(m\)-dimensional Brownian motion adapted to the filtration \(\{\mathcal{F}_t\}_{t \geq 0}\). Consider the SDE in \(\mathbb{R}^d\)
\[
\mathrm{d}X_t = \mu(t, X_t) \, \mathrm{d}t + \sigma(t, X_t) \, \mathrm{d}W_t, \quad X_0=Z,
\]
where \(Z\) is a random variable, \(\mathcal{F}_0\)-measurable, with \(\mathbb{E}[\|X_0\|^2] < \infty\). Furthermore, assume the following conditions hold:
\begin{itemize}
    \item {Lipschitz continuity:} There exist constants \(L_{\mu}, L_\sigma > 0\) such that for all \(t \in [0, T]\) and \(x, y \in \mathbb{R}^d\)
    \[
    \|\mu(t, x) - \mu(t, y)\| \leq L_{\mu} \|x - y\|, \quad
    \|\sigma(t, x) - \sigma(t, y)\|_F \leq L_\sigma \|x - y\|,
    \]
    where \(\|\cdot\|_F\) denotes the Frobenius norm.
    
    \item {Linear growth:} There exist constants \(K_{\mu}, K_\sigma > 0\) such that for all \(t \in [0, T]\) and \(x \in \mathbb{R}^d\)
    \[
    \|\mu(t, x)\| \leq K_{\mu}(1 + \|x\|), \quad \|\sigma(t, x)\|_F \leq K_\sigma(1 + \|x\|).
    \]
\end{itemize}

Then, there exists a unique stochastic process \(\{X_t\}_{t \geq 0}\) satisfying the SDE such that:
\begin{enumerate}
    \item \textbf{Existence:} The solution \(X_t\) exists, is adapted to \(\{\mathcal{F}_t\}_{t \geq 0}\), and satisfies
    \[
    \mathbb{E} \left[ \sup_{t \in [0, T]} \|X_t\|^2 \right] < \infty.
    \]
    \item \textbf{Uniqueness:} If \(\{X_t\}\) and \(\{Y_t\}\) are two solutions with the same initial condition \(Z\), then
    \[
    \mathbb{P} \left( X_t = Y_t, \, \forall t \in [0, T] \right) = 1.
    \]
\end{enumerate}
\end{theorem}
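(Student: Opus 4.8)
The plan is to adapt the classical Picard iteration (method of successive approximations) to the stochastic setting, working throughout from the integral form of the SDE,
\[
X_t = Z + \int_0^t \mu(s, X_s)\,\mathrm{d}s + \int_0^t \sigma(s, X_s)\,\mathrm{d}W_s.
\]
First I would define the iterates $X_t^{(0)} = Z$ and, recursively,
\[
X_t^{(n+1)} = Z + \int_0^t \mu(s, X_s^{(n)})\,\mathrm{d}s + \int_0^t \sigma(s, X_s^{(n)})\,\mathrm{d}W_s.
\]
The natural ambient space is the Banach space of continuous adapted processes equipped with the norm $\|X\|_T^2 = \mathbb{E}\big[\sup_{t\le T}\|X_t\|^2\big]$, which is complete. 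Using the linear growth bounds together with Cauchy--Schwarz (for the drift) and Doob's maximal inequality combined with the Itô isometry (for the diffusion), I would show by induction that every iterate lies in this space, i.e. $\|X^{(n)}\|_T < \infty$.

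The core of the argument is a recursive estimate on the increments $D_t^{(n)} = X_t^{(n+1)} - X_t^{(n)}$. Splitting into drift and diffusion parts and applying the Lipschitz hypotheses gives a bound of the form
\[
\mathbb{E}\Big[\sup_{s\le t}\|D_s^{(n)}\|^2\Big] \le C\int_0^t \mathbb{E}\Big[\sup_{r\le s}\|D_r^{(n-1)}\|^2\Big]\,\mathrm{d}s,
\]
where $C$ depends only on $T$, $L_\mu$, and $L_\sigma$. The drift term is controlled by Cauchy--Schwarz and the Lipschitz estimate on $\mu$; for the diffusion term one uses Doob's inequality to move the supremum inside the stochastic integral, the Itô isometry to convert to a time integral, and the Lipschitz estimate on $\sigma$. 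Iterating this inequality yields
\[
\mathbb{E}\Big[\sup_{s\le T}\|D_s^{(n)}\|^2\Big] \le \frac{(CT)^n}{n!}\,\mathbb{E}\Big[\sup_{s\le T}\|D_s^{(0)}\|^2\Big].
\]
Since $\sum_n \big((CT)^n/n!\big)^{1/2} < \infty$, the sequence $(X^{(n)})$ is Cauchy, hence converges to a limit $X$ that is adapted, has continuous paths, and satisfies $\|X\|_T < \infty$. Passing to the limit inside both integrals (once more via Lipschitz continuity and the Itô isometry) confirms that $X$ solves the SDE, which establishes existence.

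For uniqueness, I would take two solutions $X, Y$ sharing the initial value $Z$ and set $h(t) = \mathbb{E}\big[\sup_{s\le t}\|X_s - Y_s\|^2\big]$. The same drift and diffusion estimates produce $h(t) \le C\int_0^t h(s)\,\mathrm{d}s$ with $h(0)=0$; since $h$ is finite and bounded on $[0,T]$, Gronwall's inequality forces $h \equiv 0$. This gives $X_t = Y_t$ almost surely for each fixed $t$, and joint path continuity upgrades the statement to $\mathbb{P}(X_t = Y_t\ \forall t\in[0,T])=1$. I expect the principal obstacle to be the uniform-in-time control demanded by $\mathbb{E}\big[\sup_{t\le T}\|X_t\|^2\big]<\infty$: the Itô isometry alone bounds only the second moment at a fixed time, so the supremum must be transported inside the stochastic integral via Doob's (or the Burkholder--Davis--Gundy) maximal inequality, and care is also needed to verify that adaptedness and measurability are preserved in the $L^2$ limit.
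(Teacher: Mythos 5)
The paper does not include a proof of this theorem; it defers to the classical references (e.g.\ \O ksendal), where the argument is precisely the Picard-iteration scheme you describe, with Cauchy--Schwarz for the drift, Doob/It\^o isometry for the stochastic integral, the factorial bound giving convergence in $L^2(\Omega; C([0,T]))$, and Gronwall for uniqueness. Your proposal is correct and matches that standard proof, including the right identification of the key technical point (transporting the supremum inside the stochastic integral via a maximal inequality).
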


Itô processes form a broad class of continuous stochastic processes constructed via stochastic integration with respect to Brownian motion. While they capture random evolution in systems with drift and diffusion components, they do not necessarily possess the Markov property or arise as solutions to well-posed stochastic differential equations. \textit{Diffusion processes} refine this notion by requiring that the process be a strong solution to an SDE with measurable coefficients, continuous paths, and the Markov property. Such processes admit an infinitesimal generator, often associated with second-order differential operators, and their transition probabilities typically satisfy the Fokker–Planck or Kolmogorov forward equation. The next chapter introduces the formal definition of diffusion processes and explores their analytical structure within the framework of stochastic differential equations and probabilistic PDE theory.

\section{Diffusion Processes}

Following the foundation laid by Itô calculus, we now turn to the study of diffusion processes, which generalize Brownian motion by incorporating both deterministic drift and state-dependent diffusion. In this section, we define diffusion processes rigorously and explore their key properties, including the associated infinitesimal generator. We will also examine how these processes lead to the formulation of the Fokker-Planck equation, which describes the evolution of the probability density of a diffusion process over time.

Diffusion processes are continuous-time, continuous-path Markov processes that arise as strong solutions to stochastic differential equations driven by Brownian motion. They constitute a fundamental class of stochastic models with deep connections to partial differential equations and infinitesimal generators.
    
\begin{definition}{}{}
 A \(d\)-dimensional stochastic process \(\{X_t\}_{t \geq 0} = (X_t^1, \dots, X_t^d)^\top \in \mathbb{R}^d\) is called an \textbf{Itô diffusion process} if it satisfies the SDE  
 \[
\mathrm{d}X_t = \mu(t, X_t) \, \mathrm{d}t + \sigma(t, X_t) \, \mathrm{d}W_t, \quad X_0 = Z,
\]
where \(\{W_t\}_{t \geq 0}\) be an \(m\)-dimensional Brownian motion and \(\mu: \mathbb{R}_+ \times \mathbb{R}^d \to \mathbb{R}^d\), \(\sigma: \mathbb{R}_+ \times \mathbb{R}^d \to \mathbb{R}^{d \times m}\), $Z\in\mathbb{R}^d$ satisfy the conditions in Theorem \ref{th:existence-uniqueness}. Furthermore, \(\{X_t\}_{t \geq 0}\) is called an \textbf{ (time-homogeneous) Itô diffusion} if $\mu$ and $\sigma$ do not depend explicitly on time, i.e., $\mu(t, X_t)=\mu(X_t)$ and $\sigma(t, X_t)=\sigma(X_t)$.
\end{definition}

A widely used diffusion process in financial and mathematical models is the Geometric Brownian Motion (GBM) \(\{S_t\}_{t \geq 0}\in\mathbb{R}\), described by the one-dimensional SDE
\begin{equation}\label{GBM-SDE}
    \mathrm{d}S_t = \mu S_t \, \mathrm{d}t + \sigma S_t \, \mathrm{d}W_t,\quad, S_0=s,
\end{equation}
where \(\mu\in\mathbb{R}\) and \(\sigma\in\mathbb{R}^+\) are the global growth rate and volatility of the process. This model is commonly applied in financial models, such as the Black-Scholes model for option pricing. In Figure \ref{fig:GBM}, the process $S_t$ grows on average due to the global growth rate, with fluctuations caused by volatility terms. This behavior is characteristic of the evolution of financial asset prices.
\begin{figure}[ht]
    \centering
    \includegraphics[width=0.6\textwidth, height=0.25\textheight]{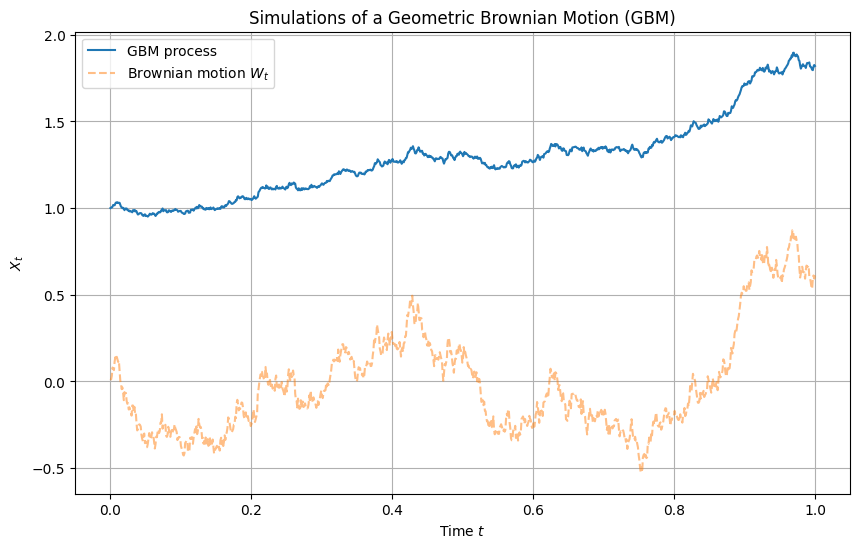}
    \caption{Simulation of $S_t$ described by \eqref{GBM-SDE}, where $\mu=0.5$, $\mu=0.2$ and $s=1$.}
    \label{fig:GBM}
\end{figure}

The Markov and strong Markov properties formalize the memoryless behavior of diffusion processes. The Markov property ensures that conditional future distributions depend only on the present state, while the strong Markov property extends this to random times, ensuring the process restarts probabilistically at stopping times.

\begin{theorem}{The Markov property for Itô diffusions}{}
 Let \((\Omega, \mathcal{F}, \{\mathcal{F}_t\}_{t \geq 0}, \mathbb{P})\) be a filtered probability space and  \(\{X_t\}_{t \geq 0} \in \mathbb{R}^d\) an Itô diffusion defined on \((\Omega, \mathcal{F}, \mathbb{P})\). Then, for all \(0 \leq s \leq t\) and any bounded measurable function \(f: \mathbb{R}^d \to \mathbb{R}\), it holds that
 \[
\mathbb{E}[ f(X_t) \mid \mathcal{F}_s ] = \mathbb{E}[ f(X_t) \mid X_s ], \quad \mathbb{P}-\text{a.s.}
\]
\end{theorem}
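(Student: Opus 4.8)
The plan is to reduce the Markov property to two ingredients: the pathwise uniqueness furnished by Theorem~\ref{th:existence-uniqueness}, which lets us express $X_t$ as a measurable functional of $X_s$ together with the post-$s$ Brownian increments, and an independence (``freezing'') argument that evaluates the conditional expectation by substituting the $\mathcal{F}_s$-measurable value $X_s$ into a deterministic function. Throughout I treat $f$ bounded and measurable so that all expectations are finite.

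First I would set up the solution flow. For a fixed time $s$ and a deterministic point $x \in \mathbb{R}^d$, consider the SDE restarted at time $s$ with initial value $x$, and denote by $\{X_r^{s,x}\}_{r \geq s}$ its unique solution, which exists by Theorem~\ref{th:existence-uniqueness} applied on $[s,\infty)$. This solution is built (e.g.\ via Picard iteration as an $L^2$-limit of adapted approximations) solely from the increments $\{W_r - W_s : r \geq s\}$, so the map $(x,\omega) \mapsto X_t^{s,x}(\omega)$ is jointly measurable and, for each fixed $x$, the random variable $X_t^{s,x}$ is independent of $\mathcal{F}_s$ because the post-$s$ increments are.

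Second, and this is the crux, I would establish the flow identity $X_t = X_t^{s,\,X_s}$ for every $t \geq s$, $\mathbb{P}$-almost surely. The restricted process $\{X_r\}_{r \geq s}$ satisfies the same SDE on $[s,\infty)$ with the (random) initial datum $X_s$; by the uniqueness clause of Theorem~\ref{th:existence-uniqueness} it must coincide with the solution obtained by substituting $X_s$ for the deterministic starting point $x$. Justifying this substitution rigorously is the main obstacle: one must check that plugging the $\mathcal{F}_s$-measurable random variable $X_s$ into the jointly measurable family $x \mapsto X_t^{s,x}$ yields precisely the solution with initial condition $X_s$, which is where joint measurability of the flow and pathwise uniqueness are both essential.

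Finally I would invoke the independence lemma. Writing $h(x) := \mathbb{E}[f(X_t^{s,x})]$, the functional $f(X_t^{s,x})$ depends only on the increments of $W$ after time $s$ and is therefore independent of $\mathcal{F}_s$, while $X_s$ is $\mathcal{F}_s$-measurable; hence
\[
\mathbb{E}[f(X_t) \mid \mathcal{F}_s]
= \mathbb{E}\big[f(X_t^{s,x})\big]\big|_{x = X_s}
= h(X_s),
\quad \mathbb{P}\text{-a.s.}
\]
Since $h(X_s)$ is $\sigma(X_s)$-measurable and $\sigma(X_s) \subseteq \mathcal{F}_s$, the tower property gives $\mathbb{E}[f(X_t) \mid X_s] = \mathbb{E}\big[\mathbb{E}[f(X_t)\mid\mathcal{F}_s]\mid X_s\big] = h(X_s)$ as well, so both conditional expectations equal $h(X_s)$, which is exactly the assertion.
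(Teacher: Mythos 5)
The paper omits the proof of this theorem, deferring to the cited references; your argument is precisely the standard one found there (e.g.\ \cite{oksendal}), built on the flow identity $X_t = X_t^{s,X_s}$ obtained from pathwise uniqueness together with the freezing lemma applied to the $\mathcal{F}_s$-independent functional $x \mapsto f(X_t^{s,x})$. The proposal is correct and complete in outline, and it rightly isolates the two points that need care --- joint measurability of $(x,\omega)\mapsto X_t^{s,x}(\omega)$ and the substitution of the random initial datum $X_s$ into the flow --- so nothing essential is missing.
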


\begin{theorem}{The strong Markov property for Itô diffusions}{}
Let \(\{X_t\}_{t \geq 0}\) be an Itô diffusion in \(\mathbb{R}^d\), adapted to a filtration \(\{\mathcal{F}_t\}_{t \geq 0}\), and let \(\tau\) be a \textit{stopping time} with respect to \(\{\mathcal{F}_t\}_{t \geq 0}\), meaning that \(\{\tau \leq t\} \in \mathcal{F}_t\) for all \(t \geq 0\).  
Then, for every stopping time \(\tau\), every \(t \geq 0\), and any bounded measurable function \(f: \mathbb{R}^d \to \mathbb{R}\), we have
\[
\mathbb{E} [ f(X_{t+\tau}) \mid \mathcal{F}_\tau ] = \mathbb{E} [ f(X_{t+\tau}) \mid X_\tau ], \quad \mathbb{P}-\text{a.s.}
\]
\end{theorem}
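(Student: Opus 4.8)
The plan is to prove the sharper, fully explicit identity
\[
\mathbb{E}[f(X_{t+\tau}) \mid \mathcal{F}_\tau] = (P_t f)(X_\tau), \qquad (P_t f)(y) := \mathbb{E}[f(X_t) \mid X_0 = y],
\]
from which the stated claim follows at once: the right-hand side is $\sigma(X_\tau)$-measurable and $\sigma(X_\tau) \subseteq \mathcal{F}_\tau$, so applying $\mathbb{E}[\,\cdot \mid X_\tau]$ to both sides and using the tower property shows that $\mathbb{E}[f(X_{t+\tau}) \mid X_\tau]$ also equals $(P_t f)(X_\tau)$, whence the two conditional expectations coincide. Time-homogeneity enters essentially here: conceptually, the shifted process $s \mapsto X_{\tau+s}$ solves the \emph{same} SDE driven by the shifted increment $W_{\tau+s} - W_\tau$, which is again a Brownian motion independent of $\mathcal{F}_\tau$, so by the pathwise uniqueness of Theorem~\ref{th:existence-uniqueness} it is distributed as a fresh diffusion started from $X_\tau$. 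This is the structural reason the semigroup $(P_t)$ governs the future.

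First I would establish the identity for a stopping time $\sigma$ taking at most countably many values $\{s_1, s_2, \dots\}$. Fixing $A \in \mathcal{F}_\sigma$, each level set satisfies $A \cap \{\sigma = s_j\} \in \mathcal{F}_{s_j}$, so that
\[
\mathbb{E}[\mathbf{1}_A\, f(X_{t+\sigma})] = \sum_j \mathbb{E}\big[\mathbf{1}_{A \cap \{\sigma = s_j\}}\, f(X_{t+s_j})\big].
\]
Applying the ordinary Markov property (the preceding theorem) at the fixed deterministic time $s_j$ on each summand replaces $f(X_{t+s_j})$ in the expectation by $(P_t f)(X_{s_j})$; re-summing yields $\mathbb{E}[\mathbf{1}_A\,(P_t f)(X_\sigma)]$, and since $A$ was arbitrary the identity holds for $\sigma$. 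Next I would approximate a general stopping time $\tau$ from above by the dyadic stopping times $\tau_k := (j+1)2^{-k}$ on the event $\{\, j2^{-k} \le \tau < (j+1)2^{-k}\,\}$, each of which takes countably many values and satisfies $\tau_k \downarrow \tau$ with $\tau \le \tau_k$, hence $\mathcal{F}_\tau \subseteq \mathcal{F}_{\tau_k}$.

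Passing to the limit is where the regularity of $f$ and of the semigroup is needed. For $f \in C_b(\mathbb{R}^d)$, fix $A \in \mathcal{F}_\tau \subseteq \mathcal{F}_{\tau_k}$; the countable-valued case gives $\mathbb{E}[\mathbf{1}_A f(X_{t+\tau_k})] = \mathbb{E}[\mathbf{1}_A (P_t f)(X_{\tau_k})]$. On the left, almost-sure path continuity and continuity of $f$ give $f(X_{t+\tau_k}) \to f(X_{t+\tau})$, and boundedness lets dominated convergence pass the limit inside. On the right I would need $(P_t f)(X_{\tau_k}) \to (P_t f)(X_\tau)$, which requires the \textbf{Feller property}, namely that $y \mapsto (P_t f)(y)$ is continuous. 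Granting this, bounded convergence again yields the identity for $\tau$ against every $f \in C_b$, and a monotone-class (functional) argument extends it from $C_b$ to all bounded measurable $f$.

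The main obstacle is precisely the Feller property, i.e.\ continuous dependence of the diffusion on its initial condition. I would derive it from the hypotheses of Theorem~\ref{th:existence-uniqueness}: writing $X_t^x, X_t^y$ for the solutions started at $x, y$, subtracting the two integral equations \eqref{IP}, taking $\mathbb{E}\big[\sup_{s \le t} \|X_s^x - X_s^y\|^2\big]$, and controlling the drift and diffusion differences via the Lipschitz bounds together with Doob's maximal and the Itô isometry inequalities gives a bound of the form $\mathbb{E}\big[\sup_{s\le t}\|X_s^x - X_s^y\|^2\big] \le \|x-y\|^2 + C\int_0^t \mathbb{E}\big[\sup_{r \le s}\|X_r^x - X_r^y\|^2\big]\, \mathrm{d}s$, whence Gronwall's inequality yields $\mathbb{E}[\|X_t^x - X_t^y\|^2] \le e^{Ct}\|x-y\|^2$. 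This $L^2$-continuity, combined with boundedness and continuity of $f$, makes $(P_t f)$ continuous and legitimizes the limit above; it is the technical crux on which the whole discretization argument rests.
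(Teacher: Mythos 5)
The paper itself offers no proof of this theorem: as announced in the introduction, the proofs in these notes are omitted and deferred to the standard references (\cite{oksendal} et al.), so your argument can only be judged on its own merits and against the classical treatments.

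Your proof is essentially correct, and it follows the classical ``discretize the stopping time'' route: establish the semigroup identity $\mathbb{E}[f(X_{t+\sigma})\mid\mathcal{F}_\sigma]=(P_tf)(X_\sigma)$ for countably-valued stopping times by decomposing over the level sets, approximate a general $\tau$ by dyadic stopping times $\tau_k\downarrow\tau$, and pass to the limit using path continuity on the left and the Feller property on the right, the latter obtained from the Lipschitz hypotheses via Gronwall. This is the standard proof for Feller processes (Revuz--Yor, Kallenberg) and is perfectly valid here. It differs from the route most often taken for It\^o diffusions specifically (e.g.\ \O ksendal, Thm.~7.2.4), which avoids discretization altogether: one writes the solution as a measurable functional $X_{t}=F(x,t,\omega)$ of the starting point and the driving path, observes that $W_{\tau+s}-W_\tau$ is a Brownian motion independent of $\mathcal{F}_\tau$ (the strong Markov property of $W$), and concludes directly from strong uniqueness that $X_{\tau+\cdot}$ is a fresh copy started at $X_\tau$. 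You state exactly this idea as motivation in your first paragraph; had you carried it out, the dyadic approximation and the Feller property would both be unnecessary. The trade-off is that your route needs less measurable-dependence machinery but more regularity (the Feller property, which you correctly supply), while the direct route needs the strong Markov property of Brownian motion as an input.

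One point deserves flagging. Your countably-valued step invokes ``the ordinary Markov property (the preceding theorem)'' to replace $\mathbb{E}[f(X_{t+s_j})\mid\mathcal{F}_{s_j}]$ by $(P_tf)(X_{s_j})$. The paper's Markov property theorem only asserts the abstract identity $\mathbb{E}[f(X_t)\mid\mathcal{F}_s]=\mathbb{E}[f(X_t)\mid X_s]$; it does not identify the right-hand side with the time-homogeneous semigroup $(P_{t-s}f)(X_s)$. That identification is precisely the flow/uniqueness argument you sketch conceptually (the shifted process solves the same SDE driven by a fresh Brownian motion), so it is not circular, but it is doing real work and should be stated as a lemma rather than absorbed into a citation of the weaker theorem. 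With that made explicit, and with the routine functional monotone-class extension from $C_b$ to bounded measurable $f$ (for which you also need measurability of $y\mapsto(P_tf)(y)$ for merely measurable $f$), the argument is complete.
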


A fundamental tool in the analysis of diffusion processes is the infinitesimal generator, which encodes the local behavior of the process. It characterizes the first-order evolution of expectations of test functions and plays a central role in the connection between stochastic processes and partial differential equations. The following definition formalizes the generator for a time-homogeneous Itô diffusion. Furthermore, associated with each Itô diffusion, there is a second-order partial differential operator that is precisely the generator of the diffusion.

\begin{definition}{}{}
    Let \(\{X_t\}_{t \geq 0}\) be a (time-homogeneous) Itô diffusion in \(\mathbb{R}^d\).  
The (infinitesimal) generator \(\mathcal{A}\) of \(X_t\) is defined by
\[
\mathcal{A} f(x) = \lim_{t \to 0} \frac{\mathbb{E}^x[f(X_t)] - f(x)}{t}, \quad x \in \mathbb{R}^d.
\]
The set of functions \(f: \mathbb{R}^d \to \mathbb{R}\) such that the limit exists at \(x\) is denoted by \(D_{\mathcal{A}}(x)\), while \(D_{\mathcal{A}}\) denotes the set of functions for which the limit exists for all \(x \in \mathbb{R}^d\).
\end{definition}

\begin{theorem}{}{}
    Let \(\{X_t\}_{t \geq 0}\) be the Itô diffusion in \(\mathbb{R}^d\) described by the SDE

\[
\mathrm{d}X_t = b(X_t)\,\mathrm{d}t + \sigma(X_t)\,\mathrm{d}W_t.
\]
If \( f \in C_0^2(\mathbb{R}^d) \), then \( f \in D_{\mathcal{A}} \) and 

\[
\mathcal{A}f(x) = \sum_{i} b_i(x) \frac{\partial f}{\partial x_i} + \frac{1}{2} \sum_{i,j} (\sigma \sigma^T)_{i,j}(x) \frac{\partial^2 f}{\partial x_i \partial x_j},
\]
or, in terms of the gradient, scalar, and Frobenius inner products, the generator can be expressed as
\[
\mathcal{A}f(x) = b(x) \cdot \nabla_x f(x) + \frac{1}{2} \left( \sigma(x)\sigma(x)^\top \right) : \nabla_x \nabla_x f(x).
\]
\end{theorem}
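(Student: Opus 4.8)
The plan is to apply Itô's formula to the process $Y_t = f(X_t)$, take expectations to eliminate the stochastic integral, and then recover the generator by a short-time averaging argument. Since $f$ carries no explicit time dependence, Itô's formula reduces to
\[
\mathrm{d}f(X_t) = \Big[\nabla f(X_t)^\top b(X_t) + \tfrac{1}{2}\operatorname{Tr}\big(\sigma(X_t)\sigma(X_t)^\top \nabla^2 f(X_t)\big)\Big]\mathrm{d}t + \nabla f(X_t)^\top \sigma(X_t)\,\mathrm{d}W_t.
\]
Abbreviating the bracketed drift by $g(X_s)$ and writing the identity in integral form under $\mathbb{P}^x$, I would obtain
\[
f(X_t) = f(x) + \int_0^t g(X_s)\,\mathrm{d}s + \int_0^t \nabla f(X_s)^\top \sigma(X_s)\,\mathrm{d}W_s.
\]

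Next I would take expectation $\mathbb{E}^x$ on both sides. The decisive structural fact is that $f \in C_0^2(\mathbb{R}^d)$ has compact support, so $\nabla f$ and $\nabla^2 f$ are bounded and compactly supported; together with the continuity of $\sigma$ (hence its boundedness on that support), the integrand $\nabla f(X_s)^\top \sigma(X_s)$ is bounded and therefore lies in $L^2([0,T]\times\Omega)$. The zero-mean property of the Itô integral (from the Fundamental Properties theorem) then annihilates the martingale term, leaving
\[
\mathbb{E}^x[f(X_t)] - f(x) = \mathbb{E}^x\!\left[\int_0^t g(X_s)\,\mathrm{d}s\right] = \int_0^t \mathbb{E}^x[g(X_s)]\,\mathrm{d}s,
\]
where Fubini's theorem justifies the final interchange, again by boundedness of $g$.

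Finally, I would divide by $t$ and let $t \to 0$. Because $f \in C_0^2$ and $b,\sigma$ are continuous, $g$ is continuous and bounded; almost-sure path continuity of $X$ gives $g(X_s) \to g(x)$ as $s \to 0$, so dominated convergence yields $\mathbb{E}^x[g(X_s)] \to g(x)$, i.e.\ the map $s \mapsto \mathbb{E}^x[g(X_s)]$ is continuous at $0$. The elementary fact that $\tfrac{1}{t}\int_0^t h(s)\,\mathrm{d}s \to h(0)$ for $h$ continuous at $0$ then gives
\begin{align*}
\mathcal{A}f(x) = \lim_{t\to 0}\frac{1}{t}\int_0^t \mathbb{E}^x[g(X_s)]\,\mathrm{d}s &= g(x) \\
&= \sum_i b_i(x)\frac{\partial f}{\partial x_i}(x) + \frac{1}{2}\sum_{i,j}(\sigma\sigma^\top)_{ij}(x)\frac{\partial^2 f}{\partial x_i \partial x_j}(x),
\end{align*}
which is the claimed formula; in particular the defining limit exists, so $f \in D_{\mathcal{A}}$. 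The main obstacle is precisely the justification of passing the limit through the expectation and the time integral, and this is exactly where the compact support of $f$ is essential: it upgrades the mere linear-growth hypotheses on $b$ and $\sigma$ into genuine uniform bounds on $g$ and on the stochastic integrand, legitimizing both the vanishing of the martingale term and the dominated-convergence step.
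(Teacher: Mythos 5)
The paper does not actually prove this theorem --- it is one of the results whose proofs are explicitly omitted and deferred to the cited references (e.g.\ \O ksendal). Your argument is the standard one found there: Itô's formula applied to $f(X_t)$, annihilation of the martingale term via the zero-mean property, Fubini, and the fundamental-theorem-of-calculus limit $\tfrac{1}{t}\int_0^t \mathbb{E}^x[g(X_s)]\,\mathrm{d}s \to g(x)$. The proof is correct, and you correctly identify the one place where $f \in C_0^2(\mathbb{R}^d)$ (rather than merely $C^2$) is doing real work, namely in bounding the stochastic integrand and the drift term $g$ so that the martingale property and dominated convergence apply.
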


For example, the generator \(\mathcal{A}\) for standard \(d\)-dimensional Brownian motion \(W\), which satisfies the stochastic diferential equation \(\mathrm{d}X_t = \mathrm{d}W_t\), is given by

\[
\mathcal{A} f(x) = \frac{1}{2} \sum_{i,j=1}^{d} \delta_{ij} \frac{\partial^2 f}{\partial x_i \partial x_j}(x) 
= \frac{1}{2} \sum_{i=1}^{d} \frac{\partial^2 f}{\partial x_i^2}(x)=\frac{1}{2}\Delta f(x),
\]
where \(\Delta\) denotes the Laplace operator.

Having defined the Itô diffusion process and its infinitesimal generator, we now turn to the study of the \textit{Kolmogorov backward equation}. This equation governs the evolution of the expected value of a given function of the process over time, providing a crucial link between stochastic processes and partial differential equations. The next theorem presents the formal statement of this equation.

\begin{theorem}{}{}
  Let \(\{X_t\}_{t \geq 0}\) be an Itô diffusion in \(\mathbb{R}^d\) satisfying the SDE
\[
\mathrm{d}X_t = b(X_t)\,\mathrm{d}t + \sigma(X_t) \,\mathrm{d}W_t. 
\] 
Let \(u: [0, \infty) \times \mathbb{R}^d \to \mathbb{R}\) be defined as
\[
u(t, x) = \mathbb{E}^x[f(X_t)],
\]
where \(f \in C_b^2(\mathbb{R}^d)\). Then, \(u(t, x)\) is differentiable with respect to $t$, \( u(t, \cdot) \in D_{\mathcal{L}}\), and $u$ satisfies the following partial differential equation, known as \textbf{Kolmogorov's Backward Equation}
\[
\begin{cases}
\frac{\partial u}{\partial t}(t, x) + \mathcal{L} u(t, x) = 0, & t > 0, \, x \in \mathbb{R}^d, \\
u(0, x) = f(x), & x \in \mathbb{R}^d,
\end{cases}
\]
where \(\mathcal{L}\) is the infinitesimal generator of the process \(X_t\).
\end{theorem}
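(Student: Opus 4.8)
The plan is to exploit the semigroup (Markov) structure of the diffusion together with Itô's formula, and then to reconcile the sign in the displayed equation explicitly. Write $u(t,x)=\mathbb{E}^x[f(X_t)]$, so that $u$ is the action of the transition operator $T_t$ on $f$. The initial condition is immediate: $u(0,x)=\mathbb{E}^x[f(X_0)]=f(x)$. The structural input is the Chapman--Kolmogorov relation, obtained from the Markov property together with the tower property of conditional expectation: for $h>0$,
\[
u(t+h,x)=\mathbb{E}^x[f(X_{t+h})]=\mathbb{E}^x\big[\mathbb{E}^{X_h}[f(X_t)]\big]=\mathbb{E}^x[u(t,X_h)],
\]
which is exactly the flow identity $T_{t+h}=T_h T_t$.

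Next I would differentiate in $t$ by feeding the $x$-regular function $y\mapsto u(t,y)$ into Itô's formula. Applying the stated Itô formula to $s\mapsto u(t,X_s)$ on $[0,h]$ and taking expectations, the martingale term $\int_0^h \nabla u(t,\cdot)^\top\sigma(X_s)\,\mathrm{d}W_s$ has zero mean (by the martingale / zero-mean property of the Itô integral), leaving Dynkin's formula
\[
u(t+h,x)-u(t,x)=\mathbb{E}^x[u(t,X_h)]-u(t,x)=\mathbb{E}^x\!\left[\int_0^h \mathcal{L}\,u(t,\cdot)(X_s)\,\mathrm{d}s\right],
\]
where $\mathcal{L}$ is the second-order operator identified with the generator in the preceding theorem. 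Dividing by $h$, using continuity of $s\mapsto\mathbb{E}^x[\mathcal{L}u(t,\cdot)(X_s)]$ at $s=0$, and letting $h\downarrow 0$ yields that $u$ is differentiable in $t$ with
\[
\frac{\partial u}{\partial t}(t,x)=\mathcal{L}\,u(t,\cdot)(x);
\]
the left derivative matches by the analogous computation $u(t,x)-u(t-h,x)=\mathbb{E}^x[\int_0^h \mathcal{L}u(t-h,\cdot)(X_s)\,\mathrm{d}s]$ plus continuity in $t$. In particular $u(t,\cdot)\in D_{\mathcal{L}}$ by construction.

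The hard part will be the spatial regularity needed to run Itô's formula on $u(t,\cdot)$: I must show $u(t,\cdot)\in C^2$ with enough growth control that $\nabla u(t,\cdot)^\top\sigma(X_\cdot)$ lies in $L^2$ and the Itô term is a genuine mean-zero martingale. Under the Lipschitz and linear-growth hypotheses inherited from Theorem~\ref{th:existence-uniqueness}, the stochastic flow $x\mapsto X_t^x$ is mean-square differentiable with moment bounds uniform on compacta, so one differentiates $u(t,x)=\mathbb{E}[f(X_t^x)]$ twice under the expectation (dominated convergence, using $f\in C_b^2$ and the flow-derivative estimates). This gives $u(t,\cdot)\in C^2$ with bounds uniform in $t$ on $[0,T]$ and simultaneously supplies the domination that justifies interchanging the limit $h\downarrow 0$ with the expectation above.

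Finally I would reconcile the sign. The computation produces $\partial_t u=\mathcal{L}u$ with initial datum $u(0,\cdot)=f$, which is the forward (initial-value) parametrization. The displayed form $\partial_t u+\mathcal{L}u=0$ is the genuine \emph{backward} statement, recovered under the standard time-to-terminal convention: setting $v(s,x)=\mathbb{E}[f(X_T)\mid X_s=x]$ for $s\in[0,T]$, the identical argument gives $\partial_s v+\mathcal{L}v=0$ with terminal condition $v(T,\cdot)=f$, and the two descriptions are identified through $u(t,x)=v(T-t,x)$, under which $\partial_t u=-\partial_s v=\mathcal{L}v=\mathcal{L}u$. I would state this identification explicitly, so that the identity proven here ($\partial_t u=\mathcal{L}u$, forward time) and the theorem's displayed equation ($\partial_t u+\mathcal{L}u=0$, backward time) are understood as the same result expressed in the two conventions.
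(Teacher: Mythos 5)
Your overall architecture --- the Markov/Chapman--Kolmogorov identity $u(t+h,x)=\mathbb{E}^x[u(t,X_h)]$, a Dynkin-type formula, and a limit as $h\downarrow 0$ --- is the standard route (the paper omits this proof entirely, deferring to references such as \cite{oksendal}, whose argument matches yours in outline), and your sign reconciliation is a legitimate and worthwhile observation: with $u(t,x)=\mathbb{E}^x[f(X_t)]$ and initial datum $u(0,\cdot)=f$, the equation actually satisfied is $\partial_t u=\mathcal{L}u$, while the displayed form $\partial_t u+\mathcal{L}u=0$ is the terminal-value convention for $v(s,x)=\mathbb{E}[f(X_T)\mid X_s=x]$, the two being exchanged by $t\mapsto T-s$ exactly as you say.

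The genuine gap is in the step you yourself flag as ``the hard part.'' You apply It\^o's formula to $y\mapsto u(t,y)$, which requires $u(t,\cdot)\in C^2$, and you claim to extract this from the Lipschitz and linear-growth hypotheses of Theorem~\ref{th:existence-uniqueness} via mean-square differentiability of the flow $x\mapsto X_t^x$. That inference fails: Lipschitz coefficients yield at best first-order differentiability of the flow, and differentiating $\mathbb{E}[f(X_t^x)]$ \emph{twice} under the expectation requires $b,\sigma$ to be roughly $C^2$ with controlled derivatives --- hypotheses the theorem does not grant. The detour is also unnecessary, and avoiding it is precisely why the statement asserts only $u(t,\cdot)\in D_{\mathcal{L}}$ rather than $u(t,\cdot)\in C^2$. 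Instead, apply It\^o's formula to $f$ itself, which is legitimate since $f\in C_b^2$ (the martingale term is mean-zero because $\nabla f$ is bounded, $\sigma$ has linear growth, and $\mathbb{E}\bigl[\sup_{s\le T}\|X_s\|^2\bigr]<\infty$), obtaining
\[
u(t,x)=f(x)+\mathbb{E}^x\!\left[\int_0^t \mathcal{L}f(X_s)\,\mathrm{d}s\right],
\]
so that $t\mapsto u(t,x)$ is differentiable with $\partial_t u(t,x)=\mathbb{E}^x[\mathcal{L}f(X_t)]$. Then your Markov identity, rewritten as
\[
\frac{\mathbb{E}^x[u(t,X_h)]-u(t,x)}{h}=\frac{u(t+h,x)-u(t,x)}{h}\;\longrightarrow\;\partial_t u(t,x),\qquad h\downarrow 0,
\]
exhibits exactly the limit that defines the generator acting on $u(t,\cdot)$: by the paper's definition of $D_{\mathcal{L}}$, membership $u(t,\cdot)\in D_{\mathcal{L}}$ and the identity $\mathcal{L}u(t,\cdot)(x)=\partial_t u(t,x)$ follow with no spatial smoothness of $u$ whatsoever. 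Your flow-differentiability program can only be rescued by strengthening the smoothness assumptions on $b$ and $\sigma$ beyond those stated.
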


The Kolmogorov backward equation describes the evolution of the expected value of a function of the process, conditioned on the present state. In contrast, the \textit{Fokker-Planck equation}, also known as the Kolmogorov forward equation, governs the time evolution of the \textit{probability density} of the process. While the backward equation concerns conditional expectations, the forward equation describes the evolution of the distribution itself. The following theorem formalizes this relationship, showing that the probability density of a diffusion process satisfies the Fokker-Planck equation, where the infinitesimal generator plays a central role in determining the evolution of the density.

\newpage

\begin{theorem}{}{}
Let \(\{X_t\}_{t \geq 0}\) be an Itô diffusion in \(\mathbb{R}^d\) satisfying the SDE
\[
\mathrm{d}X_t = b(X_t)\,\mathrm{d}t + \sigma(X_t) \,\mathrm{d}W_t.
\]
Assume that \(b(x) \in C^1( \mathbb{R}^d; \mathbb{R}^d)\) and \(\sigma(x) \in C^2(\mathbb{R}^n; \mathbb{R}^{d \times m})\). If \(p(t, x)\) is the probability density of \(X_t\), that is, 
\[ \mathbb{P}(X_t \in A) = \int_A p(t, x)\,\mathrm{d}x, \quad \forall A \in \mathcal{B}(\mathbb{R}^d), 
\]
and satisfies \( p(t, x) \in C^{1,2}((0, \infty) \times \mathbb{R}^d) \), with \( p_0(x) \in C^2(\mathbb{R}^d) \) and sufficient decay as \( \|x\| \to \infty \), that is, 
\[
p_0(x) \leq \frac{C}{(1 + \|x\|)^\alpha}, \quad \text{for some } C > 0, \alpha > 0.
\]
Then \( p(t, x) \) satisfies the \textbf{Fokker–Planck equation}
\[
\begin{cases}
\frac{\partial p}{\partial t}(t, x) = \mathcal{L}^* p(t, x), & t > 0, \, x \in \mathbb{R}^d, \\
p(0, x) = p_0(x), & x \in \mathbb{R}^d,
\end{cases}
\]
where the adjoint operator is
\[
\mathcal{L}^* p = - \sum_{i=1}^{n} \frac{\partial}{\partial x_i} \left( b_i(x) p \right) + \frac{1}{2} \sum_{i,j=1}^{n} \frac{\partial^2}{\partial x_i \partial x_j} \left( a_{ij}(x) p \right),
\]
where \(a(x) = \sigma(x) \sigma^\top(x)\) is the diffusion tensor. 
\end{theorem}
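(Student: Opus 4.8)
The plan is to establish the Fokker--Planck equation first in \emph{weak (distributional) form}, by tracking how $\mathbb{E}^x[f(X_t)]$ evolves when tested against smooth compactly supported functions $f$, and then to upgrade this to the pointwise PDE using the assumed regularity of $p$. The engine of the argument is Itô's formula combined with the duality between the generator $\mathcal{L}$ and its formal adjoint $\mathcal{L}^*$; the probabilistic input is short, and the work lies in the analytic bookkeeping.

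Concretely, fix a test function $f \in C_0^\infty(\mathbb{R}^d)$. First I would apply Itô's formula to $f(X_t)$; since $f$ carries no explicit time dependence, this yields
\[
\mathrm{d}f(X_t) = \mathcal{L}f(X_t)\,\mathrm{d}t + \nabla f(X_t)^\top \sigma(X_t)\,\mathrm{d}W_t,
\]
where $\mathcal{L}f = b\cdot\nabla f + \tfrac{1}{2}\operatorname{Tr}(\sigma\sigma^\top\nabla^2 f)$ is exactly the infinitesimal generator. Note that $\mathcal{L}f$ is continuous with compact support, since $\nabla f$ and $\nabla^2 f$ vanish outside a compact set while $b,\sigma$ are continuous hence locally bounded; in particular the integrand $\nabla f^\top\sigma$ is bounded on that set, so the stochastic integral is a genuine martingale. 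Taking expectations and using the zero-mean property of the Itô integral gives the integrated (Dynkin) identity $\mathbb{E}[f(X_t)] - \mathbb{E}[f(X_0)] = \int_0^t \mathbb{E}[\mathcal{L}f(X_s)]\,\mathrm{d}s$, and differentiating in $t$ (the integrand is continuous in $s$) produces $\tfrac{d}{dt}\mathbb{E}[f(X_t)] = \mathbb{E}[\mathcal{L}f(X_t)]$.

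Next I would rewrite both sides in terms of the density. Since $p(t,\cdot)$ is the law of $X_t$, the identity becomes
\[
\frac{d}{dt}\int_{\mathbb{R}^d} f(x)\,p(t,x)\,\mathrm{d}x = \int_{\mathbb{R}^d} \mathcal{L}f(x)\,p(t,x)\,\mathrm{d}x.
\]
On the left I interchange $\tfrac{d}{dt}$ with the integral to expose $\partial_t p$; on the right I integrate by parts to transfer all derivatives off $f$ and onto the coefficient--density products, which is \emph{precisely} the definition of $\mathcal{L}^*$. Because $f$ has compact support, every boundary term vanishes, leaving $\int f\,\partial_t p\,\mathrm{d}x = \int f\,\mathcal{L}^* p\,\mathrm{d}x$. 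Since this holds for all $f\in C_0^\infty(\mathbb{R}^d)$ and the function $\partial_t p - \mathcal{L}^* p$ is continuous (guaranteed by $p\in C^{1,2}$, $b\in C^1$, $\sigma\in C^2$), the fundamental lemma of the calculus of variations forces $\partial_t p = \mathcal{L}^* p$ pointwise. The initial condition $p(0,\cdot)=p_0$ is the standing hypothesis.

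The main obstacle is the rigorous justification of the two interchanges of limit and integral. Differentiating under the integral sign on the left requires a dominating function for $\partial_t p(t,x)$, locally uniform in $t$; this is exactly where the $C^{1,2}$ regularity, together with the decay bound $p_0(x)\le C(1+\|x\|)^{-\alpha}$ propagated to $p(t,\cdot)$, is invoked to secure integrability and apply dominated convergence. The integration by parts itself is clean thanks to the compact support of $f$, but one still needs $b_i p$ and $a_{ij}p$ to be differentiable in the required order, which is supplied by $b\in C^1$, $\sigma\in C^2$ and $p\in C^{1,2}$. I would treat these dominated-convergence and differentiability estimates carefully, as essentially all the difficulty of the theorem is concentrated in this analytic step; the probabilistic core (Itô's formula, zero mean, generator--adjoint duality) is comparatively immediate.
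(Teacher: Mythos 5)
The paper itself gives no proof of this theorem --- it is one of the results whose proofs are explicitly omitted and deferred to the cited references --- so there is nothing internal to compare against. Your argument is the canonical one found in those references: It\^o's formula plus the zero-mean property of the stochastic integral yields Dynkin's identity $\frac{d}{dt}\mathbb{E}[f(X_t)]=\mathbb{E}[\mathcal{L}f(X_t)]$ for $f\in C_0^\infty$, rewriting both sides against the density and integrating by parts gives the weak form $\int f\,\partial_t p = \int f\,\mathcal{L}^*p$, and the assumed $C^{1,2}$ regularity together with the fundamental lemma of the calculus of variations upgrades this to the pointwise PDE. This is correct in outline, and you rightly identify that the substance lies in justifying the two interchanges of limit and integral. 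One small remark: since your test functions are compactly supported, the integration by parts needs no decay of $p$ at all (all boundary terms vanish identically), and the claim that the decay of $p_0$ ``propagates'' to $p(t,\cdot)$ is both nontrivial and not actually required for your argument --- the hypotheses you genuinely use are the $C^{1,2}$ regularity of $p$, the $C^1$/$C^2$ regularity of $b$ and $\sigma$, and local uniform control of $\partial_t p$ to differentiate under the integral sign.
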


The relationship between the Brownian motion density and the heat equation, as shown in Corollary \ref{cor:BrownianHeatDensity}, establishes the foundation for the generalization of this result to diffusion processes. The Fokker-Planck equation extends this connection, governing the evolution of the probability density of a diffusion process under the influence of both deterministic drift and stochastic fluctuations. While the heat equation describes the time evolution of the Brownian motion density, the Fokker-Planck equation encompasses a broader class of stochastic processes, capturing their complex dynamics through the infinitesimal generator.

\begin{figure}[ht]
    \centering
    \includegraphics[width=0.95\textwidth, height=0.25\textheight]{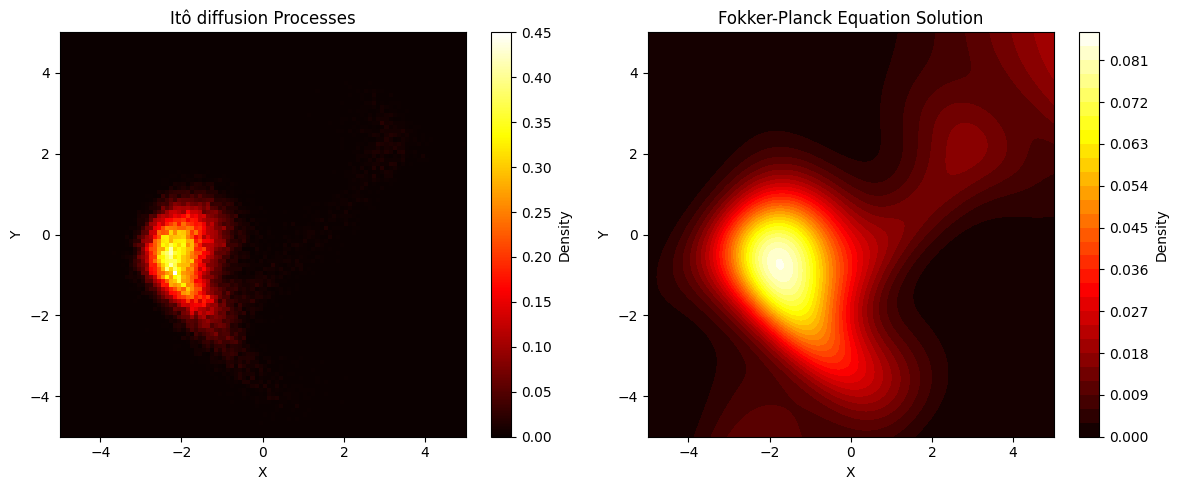}
    \caption[Comparison of empirical density of a stochastic diffusion process and the theoretical Fokker–Planck solution with asymmetric drift and spatially dependent diffusion.]{Comparison between the empirical density of a stochastic diffusion process (left) and the theoretical solution of the Fokker–Planck equation (right), with asymmetric drift and spatially dependent diffusion.}
    \label{fig:Conexion-FPE}
\end{figure}

We numerically illustrate the relationship between the \textit{empirical density} of a stochastic diffusion process and the \textit{theoretical solution} of the Fokker–Planck equation, extending the classical connection between Brownian motion and the heat equation. Consider the Itô diffusion process \(\{X_t\}_{t \geq 0} \subset \mathbb{R}^2\) governed by 
\[
\,\mathrm{d}X_t = b(X_t) \,\mathrm{d}t + \sigma(X_t)\,\mathrm{d}W_t,
\]
where \(b(x,y)\) and \(\sigma(x,y)\) denote the {drift field} and {diffusion coefficient}, respectively. The system exhibits \textit{asymmetric drift} and \textit{spatially dependent diffusion}, defined as  
\[
b_x(x,y) = -0.3x + 1.5\sin(y), \quad b_y(x,y) = -0.3y - 1.5\cos(x),
\]
\[
\sigma_x(x,y) = 0.3 + 0.2|\sin(x)|, \quad \sigma_y(x,y) = 0.3 + 0.2|\cos(y)|.
\]
The stochastic trajectories are simulated using the \textit{Euler–Maruyama scheme} with \( N = 40,000 \) particles over \( T = 3.0 \), discretized into \( N_t = 500 \) time steps. Figure~\ref{fig:Conexion-FPE} compares the \textit{empirical density} (left panel), estimated from particle trajectories, with the \textit{numerical solution} of the Fokker–Planck equation (right panel). The agreement between both representations confirms that the Fokker–Planck equation effectively describes the \textit{macroscopic evolution} of the probability density while averaging out stochastic fluctuations present in the empirical distribution. 

\vspace{0.3cm}

The \textit{Feynman-Kac formula} generalizes the Kolmogorov backward equation by incorporating both \textit{potential fields} and \textit{terminal conditions} into the description of the diffusion process. While the Kolmogorov backward equation focuses on the evolution of conditional expectations, the Feynman-Kac formula extends this framework to systems influenced by external potentials and subject to specific boundary conditions. The following theorem formalizes this connection and introduces the Feynman-Kac formula for Itô diffusion processes.

\newpage

\begin{theorem}{Feynman-Kac Formula}
Let \(\{X_t\}_{t \geq 0}\) be an Itô diffusion process in \(\mathbb{R}^d\) governed by the SDE
\[
\,\mathrm{d}X_t = b(X_t) \,\mathrm{d}t + \sigma(X_t)\,\mathrm{d}W_t,
\]
where \( b: \mathbb{R}^d \to \mathbb{R}^d \) is the drift field, \(\sigma: \mathbb{R}^d \to \mathbb{R}^{d \times m} \) is the diffusion coefficient, and \(\{W_t\}_{t \geq 0}\) is a Brownian motion in \(\mathbb{R}^d\). Define the function \( u: [0,T] \times \mathbb{R}^d \to \mathbb{R} \) as the conditional expectation
\[
u(t,x) = \mathbb{E} \left[ e^{-\int_t^T V(X_s) ds} g(X_T) \mid X_t = x \right],
\]
where \( V: \mathbb{R}^d \to \mathbb{R} \) is a potential function and \( g: \mathbb{R}^d \to \mathbb{R} \) is a terminal condition. 

Then, if \( u \in C^{1,2}([0,T] \times \mathbb{R}^d) \), it satisfies the following Kolmogorov-type partial differential equation
\[
\frac{\partial u}{\partial t} + \mathcal{L} u - V u = 0, \quad (t,x) \in [0,T) \times \mathbb{R}^d,
\]
with terminal condition
\[
u(T, x) = g(x), \quad x \in \mathbb{R}^d.
\]
Here, \(\mathcal{L}\) denotes the infinitesimal generator of the Itô diffusion \( X_t \), given by
\[
\mathcal{L} u = \sum_{i=1}^{d} b_i(x) \frac{\partial u}{\partial x_i} + \frac{1}{2} \sum_{i,j=1}^{d} a_{ij}(x) \frac{\partial^2 u}{\partial x_i \partial x_j},
\]
where \( a(x) = \sigma(x) \sigma^{\top}(x) \) is the diffusion tensor.
\end{theorem}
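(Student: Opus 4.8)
The plan is to prove the Feynman-Kac formula by applying Itô's formula to a carefully chosen process and exploiting the martingale property of the Itô integral to annihilate the stochastic term in expectation. The key idea is to define the process $M_s = e^{-\int_t^s V(X_r)\,\mathrm{d}r}\, u(s, X_s)$ for $s \in [t, T]$, where $u$ is the candidate function, and to show that $M_s$ is a martingale precisely when $u$ solves the stated PDE. First I would apply Itô's formula to the product $e^{-\int_t^s V(X_r)\,\mathrm{d}r}\, u(s, X_s)$, treating the exponential factor $Y_s = e^{-\int_t^s V(X_r)\,\mathrm{d}r}$ (which has finite variation, $\mathrm{d}Y_s = -V(X_s) Y_s\,\mathrm{d}s$, and hence contributes no quadratic-variation cross term) together with the Itô process $u(s, X_s)$.

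The next step is to expand $\mathrm{d}[u(s, X_s)]$ via Itô's formula stated earlier in the excerpt, which yields
\[
\mathrm{d}[u(s, X_s)] = \left( \frac{\partial u}{\partial s} + \mathcal{L} u \right)\,\mathrm{d}s + \nabla u^\top \sigma\,\mathrm{d}W_s,
\]
where $\mathcal{L}$ is exactly the generator $\sum_i b_i \partial_{x_i} u + \tfrac{1}{2}\sum_{i,j} a_{ij} \partial_{x_i x_j} u$. Combining this with $\mathrm{d}Y_s$ through the integration-by-parts / product rule, the drift coefficient of $\mathrm{d}M_s$ collects to $Y_s\left( \frac{\partial u}{\partial s} + \mathcal{L} u - V u \right)\,\mathrm{d}s$, while the martingale part is $Y_s\,\nabla u^\top \sigma\,\mathrm{d}W_s$. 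Since $u$ is assumed to satisfy $\frac{\partial u}{\partial t} + \mathcal{L} u - V u = 0$, the $\mathrm{d}s$ term vanishes identically, so $M_s$ reduces to a stochastic integral, hence a martingale by the Martingale Property of the Itô integral established in the fundamental-properties theorem.

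Having established that $M_s$ is a martingale on $[t,T]$, I would equate its expectation at $s=t$ and $s=T$ under the conditioning $X_t = x$. At $s = t$ the exponential factor equals $1$, giving $M_t = u(t,x)$; at $s = T$ the terminal condition $u(T, X_T) = g(X_T)$ gives $M_T = e^{-\int_t^T V(X_r)\,\mathrm{d}r}\, g(X_T)$. The martingale identity $u(t,x) = \mathbb{E}[M_t \mid X_t = x] = \mathbb{E}[M_T \mid X_t = x]$ then reproduces exactly the stated Feynman-Kac representation. Conversely, since the theorem asserts that the \emph{given} conditional expectation $u$ solves the PDE, one reads the same computation in reverse: the martingale property of $M_s$ forces its drift to vanish, and evaluating this drift pointwise yields the PDE.

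The main obstacle will be the martingale step rather than the Itô-calculus bookkeeping. To legitimately drop the $\mathrm{d}W_s$ term in expectation, I need the integrand $Y_s\,\nabla u^\top \sigma$ to lie in $L^2([t,T]\times\Omega)$, i.e. to verify the square-integrability hypothesis of the Itô integral; this requires controlling $\nabla u$ and $\sigma(X_s)$ along the trajectory. Here the boundedness of $V$ (keeping $Y_s \le 1$), the $C^{1,2}$ regularity of $u$, the linear-growth bound on $\sigma$ from Theorem \ref{th:existence-uniqueness}, and the moment estimate $\mathbb{E}[\sup_{s}\|X_s\|^2] < \infty$ combine to furnish the needed integrability — though if $\nabla u$ is merely continuous without a growth bound one must localize with stopping times $\tau_n = \inf\{s : \|X_s\| \ge n\}$, argue the martingale property on each $[t, \tau_n \wedge T]$, and pass to the limit via dominated convergence. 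I would flag this localization as the one genuinely technical point; everything else is a direct application of the Itô formula and the martingale/zero-mean properties already catalogued in the excerpt.
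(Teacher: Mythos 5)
The paper omits the proof of this theorem entirely (all proofs in this section are deferred to the cited references), so there is no in-paper argument to compare against; judged on its own terms, your outline contains the right machinery but devotes its main thrust to the wrong implication. The theorem takes $u$ to be \emph{defined} by the conditional expectation and asserts that it then satisfies the PDE; your central computation instead assumes ``$u$ satisfies $\partial_t u + \mathcal{L}u - Vu = 0$'' in order to kill the drift of $M_s = e^{-\int_t^s V(X_r)\,\mathrm{d}r}\,u(s,X_s)$ and recover the representation. That is the (also useful) converse. For the direction actually claimed you offer one sentence --- ``the martingale property of $M_s$ forces its drift to vanish'' --- without supplying the ingredient that makes $M_s$ a martingale in the first place. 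That ingredient is the tower property combined with the Markov property of the diffusion: from the definition of $u$,
\[
M_s \;=\; e^{-\int_t^s V(X_r)\,\mathrm{d}r}\,\mathbb{E}\!\left[e^{-\int_s^T V(X_r)\,\mathrm{d}r}\,g(X_T)\,\middle|\, X_s\right]
\;=\; \mathbb{E}\!\left[e^{-\int_t^T V(X_r)\,\mathrm{d}r}\,g(X_T)\,\middle|\, \mathcal{F}_s\right],
\]
so $M_s$ is a closed martingale \emph{by construction}, independently of any PDE. Only after this identification does your It\^o/integration-by-parts expansion do its job: the semimartingale decomposition of $M_s$ has drift $Y_s\left(\partial_s u + \mathcal{L}u - Vu\right)\mathrm{d}s$; since a continuous martingale admits no nontrivial finite-variation part (uniqueness of the semimartingale decomposition), that drift vanishes $\mathbb{P}$-a.s.\ for a.e.\ $s$, and continuity of the integrand in $(s,x)$ together with the support of the diffusion upgrades this to the pointwise PDE. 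Without the tower-property step, your ``converse reading'' is circular: you would be deducing the martingale property from the very PDE you are trying to establish.

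The It\^o bookkeeping itself is correct and standard: the exponential factor has finite variation, hence contributes no cross-variation, and the drift collects exactly as you state; the terminal condition $u(T,x)=g(x)$ is immediate from the definition. Your closing discussion of integrability and localization is the right instinct, with one small slip: $Y_s \le 1$ requires $V \ge 0$, not merely $V$ bounded; if $V$ is only bounded below one gets $Y_s \le e^{(T-t)\sup(-V)}$, which still suffices. It is also worth flagging that the theorem as stated imposes no hypotheses on $V$ or $g$ at all, so an honest proof must add some (e.g.\ $V$ bounded below and measurable, $g$ bounded or of polynomial growth) to guarantee that $u$ is even finite and that the stopping-time localization passes to the limit.
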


\section{Conclusions}

In this lecture notes, we have established the theoretical foundations necessary for understanding the interplay between stochastic processes and partial differential equations. We began by examining the classical relationship between Brownian motion and the heat equation, which illustrates how microscopic random motion induces a macroscopic deterministic evolution. This fundamental connection serves as the entry point for a broader theory. To extend this perspective to more general stochastic systems, we introduced Itô calculus, which provides the rigorous framework for defining stochastic integrals and for analyzing processes with both deterministic drift and stochastic noise. Equipped with these tools, we then developed the formal theory of diffusion processes, characterized as strong solutions to stochastic differential equations. We concluded the work by exploring how the macroscopic behavior of diffusion processes is governed by the Fokker–Planck equation, a generalization of the heat equation, and how both the Kolmogorov backward equation and the Feynman–Kac formula link expectations of stochastic dynamics with solutions of associated PDEs. 

\section*{Interactive Notebook Access}
For replication and further exploration, you can access the interactive Google Colab notebook accompanying these lecture notes via the following link:

\begin{center}
\href{https://colab.research.google.com/drive/161byYNgy2z4JzADjQFQbA72CfHat5Xyx?usp=sharing}{\texttt{\textcolor{red!70!black}{Open Notebook on Google Colab}}}
\end{center}

\bibliographystyle{plain}  
\bibliography{bibliography} 

\end{document}